\documentclass[11pt]{amsart}
\usepackage{amsmath,amsfonts,amsthm,amssymb,amsthm,graphicx,mathtools,amscd,enumerate,paralist,bm,hyperref}

\usepackage[mathscr]{eucal}
\usepackage{graphics,color,epsfig}
\usepackage[nocompress]{cite}

\DeclareSymbolFont{iwonaletters}{OML}{iwona}{m}{it}

\theoremstyle{plain}
\newtheorem{theorem}{Theorem}[section]
\newtheorem{lemma}[theorem]{Lemma}

\newtheorem{definition}[theorem]{Definition}

\newtheorem{proposition}[theorem]{Proposition}

\newtheorem{assumption}[theorem]{Assumption}

\newcommand{\la}{\lambda}
\newcommand{\eps}{\varepsilon}

\newcommand{\al}{\alpha}

\newcommand{\kap}{\kappa}
\newcommand{\sig}{\sigma}
\newcommand{\del}{\delta}

\newcommand{\Del}{\mathnormal{\Delta}}

\newcommand{\La}{\mathnormal{\Lambda}}

\newcommand{\Om}{\mathnormal{\Omega}}

\newcommand{\N}{{\mathbb N}}

\newcommand{\R}{{\mathbb R}}

\newcommand{\E}{{\mathbb E}}

\newcommand{\PP}{{\mathbb P}}

\newcommand{\calA}{{\mathcal A}}

\newcommand{\calC}{{\mathcal C}}

\newcommand{\calF}{{\mathcal F}}

\newcommand{\calP}{{\mathcal P}}
\newcommand{\calR}{{\mathcal R}}
\newcommand{\calS}{{\mathcal S}}

\newcommand{\calW}{{\mathcal W}}

\newcommand{\bX}{\mathbf{X}}
\newcommand{\bY}{\mathbf{Y}}

\newcommand{\frp}{\mathfrak{p}}

\renewcommand{\proof}{\noindent{\bf Proof.\ }}

\newcommand{\lan}{\langle}
\newcommand{\ran}{\rangle}

\newcommand{\oo}{\overline}

\newcommand{\w}{\wedge}

\newcommand{\To}{\Rightarrow}

\newcommand{\iy}{\infty}

\newcommand{\noi}{\noindent}

\newcommand{\sym}{{\rm sym}}

\begin{document}

\title[]{
High-dimensional limits for reflected Brownian motion in the orthant
}

\author{Rami Atar}
\address{Viterbi Faculty of Electrical \& Computer Engineering
\\
Technion--Israel Institute of Technology
} 
\email{rami@technion.ac.il}

\subjclass[2020]{60K35; 60H10; 60J55; 60J60}
\keywords{Semimartingale reflected Brownian motion; Boundary local time interaction; McKean--Vlasov limits}

\date{\today}

\begin{abstract}
We study interacting Brownian particles on the half-line whose interaction occurs through boundary local times at the origin. The particle system is given by
\[
X_i^n(t)=X^n_{0,i}+W_i^n(t)+L_i^n(t)
+\frac{1}{n-1}\sum_{j\ne i}\rho^n_{ij}L_j^n(t),
\qquad i\in[n],\ t\ge0,
\]
where the initial conditions are exchangeable, the driving Brownian motions $W_i^n$ are i.i.d., and $L_i^n$ denotes the boundary local time of $X_i^n$ at zero. For each fixed coefficient array $\{\rho^n_{ij}\}$, the system can be viewed as a semimartingale reflected Brownian motion in the orthant. We first consider the homogeneous case $\rho^n_{ij}=a$. In this case, global well-posedness holds under the completely-$\mathcal S$ condition $a>-1$. We prove propagation of chaos under this condition; the subregime $a\in(-1,0]$, in the homogeneous setting, was previously covered as part of the results of \cite{baker2025particle}.
The limiting process is the nonlinear reflected Brownian motion
\[
\oo X(t)=\oo X_0+\oo W(t)+\oo L(t)+a\mathbb E[\oo L(t)],
\qquad t\ge0.
\]
We also treat heterogeneous random coefficients $\rho^n_{ij}$, assumed to have mean $a$, support in a compact subset of $(-1,1)$, and to be independent across $j$ for each $i$.
In both the quenched and annealed settings, the particle system converges to the same McKean--Vlasov limit as in the homogeneous case. The model is motivated by large Jackson networks in heavy traffic.
\end{abstract}

\maketitle

\section{Introduction}

Consider an $n$-dimensional semimartingale reflected Brownian motion (SRBM) in the orthant $[0,\iy)^n$, $n\ge2$, described by the stochastic differential equation (SDE)
\begin{equation}\label{01a}
X_i^n(t)=X^n_{0,i}+W_i^n(t)+L_i^n(t)+\frac{1}{n-1}\sum_{j\ne i}aL_j^n(t),
\qquad i\in[n], \ t\ge0.
\end{equation}
Here, $a\in\R$ is a parameter, $W^n=(W^n_i)_{i\in[n]}$ is an $n$-dimensional Brownian motion (BM) the components of which are i.i.d.\ BM with drift and diffusion coefficients $b\in\R$ and $\sig>0$, and $L^n=(L^n_i)_{i\in[n]}$ is the boundary process: a componentwise nondecreasing process satisfying $\int_{[0,\iy)}X_i(s)dL_i(s)=0$ a.s.
A solution to the SDE exists globally in time if and only if $a>-1$, in which case it is unique in law; see Section \ref{sec2} for precise definitions and further details. Let
\[
\mu^n_t=\frac{1}{n}\sum_{i\in[n]}\del_{X^n_i(t)}.
\]
Assume exchangeability of $\{X^n_{0,i}\}_{i\in[n]}$ for each $n$, and convergence $\mu^n_0\to\mu_0$ in probability to a deterministic limit. It is then natural to regard \eqref{01a} as a system of Brownian particles on the half-line, interacting through their local time at the origin. The associated McKean--Vlasov process, which we call the {\it nonlinear RBM}, is the unique solution to
\begin{equation}\label{MVSDE}\tag{NLR}
\begin{split}
\oo X(t) &= \oo X_0+\oo W(t)+\oo L(t)+a\,\la(t)
\\
\la(t) &= \E \oo L(t),
\end{split}
\end{equation}
with $\oo X_0\sim \mu_0$, $\oo W$ a  BM with drift and diffusion coefficients $b$ and $\sig$, and $\oo L$ a boundary term; see Section \ref{sec2} for details.
Our first result proves propagation of chaos: for each fixed $k\in\N$, the tuple $\{X^n_i\}_{i\in[k]}$ converges in distribution to a tuple of $k$ independent copies of the nonlinear RBM, and the empirical distribution $\mu^n_\cdot$ converges in probability to $\mu_\cdot$, where $\mu_t$ is the law of $\oo X(t)$.
The subregime $a\in(-1,0]$ was previously covered as part of the results of \cite{baker2025particle}.

We next consider a heterogeneous version with random reflection coefficients, where the SRBM is given by
\begin{equation}\label{01b}
X_i^n(t)=X^n_{0,i}+W_i^n(t)+L_i^n(t)+\frac{1}{n-1}\sum_{j\ne i}\rho^n_{ij}L_j^n(t),
\qquad i\in[n], \ t\ge0.
\end{equation}
Here, the coefficients $\rho^n_{ij}$ are assumed to be supported in a compact subset of $(-1,1)$, have mean $a$, and for every $n\ge2$ and $i\in[n]$, the $n-1$ random variables $\rho^n_{ij}$, $j\in[n]-\{i\}$ are mutually independent.
Our second result shows that both the quenched and annealed limits are governed by the same nonlinear RBM \eqref{MVSDE}. In particular, at the McKean--Vlasov scale, the random environment enters only through the mean coefficient $a$.

\subsection*{Motivation}

A concrete source of motivation comes from open Jackson queueing networks and their generalizations. Consider a network with $n$ single-server stations. After completing service at station $i$, a job is routed to station $j$ with probability $p_{ij}$, or leaves the system with probability $1-\sum_j p_{ij}$. A classical heavy-traffic theorem of Reiman \cite{reiman1984open} shows that, for fixed $n$, the diffusion-scaled queue-length process converges to a semimartingale reflected Brownian motion in $\mathbb R_+^n$ with reflection matrix $(R_{ij})=(\delta_{ij}-p_{ji})$. This places the particle systems studied here in the same family of high-dimensional reflected diffusions. With the mean-field normalization in \eqref{01b}, the Jackson-network routing probabilities and the reflection coefficients are related via $p^n_{ji}=-\rho^n_{ij}/(n-1)$ for $i\ne j$.

In the special case where $-1<-1+\eps_\rho\le\rho^n_{ij}\le0$ and $p^n_{ij}$ of the above form, the McKean--Vlasov limit obtained in this paper describes the asymptotics of a Jackson network in which the heavy-traffic diffusion limit is taken first, with $n$ fixed, and then $n$ is sent to infinity. This motivates the further question of whether the same limit is obtained when the heavy-traffic and many-station limits are taken jointly, or in the reverse order. At the queueing-network level such interchange problems can be delicate, particularly for generalized networks whose queue-length processes are not Markovian.

In addition to the queueing network motivation, the model fits within the literature on mean-field limits and propagation of chaos for particle systems with local-time interactions; several related works are discussed below.

\subsection*{Related work}

Since the pioneering work of McKean \cite{mckean1966class}, McKean--Vlasov limits and propagation of chaos have become central topics in probability theory. For classical accounts, we refer to \cite{gar88} and \cite{sznitman1991topics}.

McKean--Vlasov limits for reflected diffusions were studied in \cite{sznitman1984nonlinear}, where the particle system is given by
\[
dX^n_i(t)=b(X^n_i(t),\mu^n_t)dt+dW_i(t)+dL^n_i(t),
\qquad i\in[n],
\]
with each $X^n_i$ constrained to lie in a closed subset of $\R^d$, and $L^n_i$ denoting the boundary term enforcing normal reflection. Numerous extensions of this framework have since been developed; for recent contributions, see e.g.\ \cite{hinds2025well, wei2026mckean}. In these works, the interaction acts through the effect of the empirical law on the drift and diffusion coefficients, but not through the local-time or boundary terms themselves.

To our knowledge, McKean--Vlasov limits of particle systems interacting through local time first appeared in \cite[Chapter II]{sznitman1991topics}, where particles on the real line satisfy
\[
dX^n_i(t)= dW_i(t)+\frac{c}{n}\sum_{j\ne i}dL^0(X^n_i-X^n_j)(t),
\qquad i\in[n],
\]
with $L^0$ denoting the symmetric local time at $0$.
Related models have also been studied in the context of particle systems subject to constraints on the empirical law. In \cite{coghi2022mckean}, a class of such systems is considered, including the following representative example: particles reflected in the unit interval satisfy
\[
dX^n_i(t)=dW_i(t)+dK^n(t)+dL^n_i(t),
\qquad i\in[n],
\]
together with the constraint $n^{-1}\sum_iX^n_i(t)=q\in(0,1)$. Here, $L^n_i$ are boundary terms for reflection in $[0,1]$, and $K^n$ is a control term enforcing the constraint. In particular, one has $dK^n=n^{-1}\sum_i dW_i+n^{-1}\sum_i dL^n_i$,
so that, similarly to \eqref{01a}, the interaction term is given by the empirical average of boundary terms. We refer to \cite{coghi2022mckean} for further references on particle systems with macroscopic constraints leading to interactions through local time.
See also \cite{barnes2020hydrodynamic}, which studies a particle system with reflection in which the interaction occurs through a term of the form $n^{-1}\sum_iL^n_i(t)dt$; there, the average boundary local time enters the dynamics through the drift.

Most closely related to the present paper is \cite{baker2025particle}. For fixed $n$, the work studies \eqref{01b} with deterministic, non-positive coefficients $\rho^n_{ij}$, including regimes in which global solutions need not exist. In such regimes, solutions exist only up to a breakdown time, which is characterized there. In addition, in the homogeneous case $\rho^n_{ij}=a$, $a\le0$, \cite{baker2025particle} studies the McKean--Vlasov limit. The subcase $a\in(-1,0]$ overlaps with the results of the present paper, while for $a<-1$ the limiting dynamics break down in finite time (and a more complicated behavior occurs when $a=-1$).

Finally, there is related work on mean-field limits for queueing models. These limits typically correspond to sending the number of servers, stations, or nodes to infinity at the level of the queueing system itself. Examples include strategic servers in a mean-field game setting \cite{bayraktar2019rate} and a randomized load-balancing algorithm studied in \cite{ata-wol}. Generalized Jackson networks have been studied under mean-field asymptotics in \cite{rybko2016stationary}, as part of a line of research on the Poisson hypothesis: as the number of servers increases, the inflow to each node becomes asymptotically Poisson. See also the earlier works \cite{rybko2005poisson,rybko2005poisson2,baccelli1992mean} and the references therein. This direction is distinct from the heavy-traffic motivation described above: it studies many-node limits of the original queueing networks without diffusion scaling of the queue-length or delay processes, and therefore does not lead to heavy-traffic reflected Brownian approximations.

\subsection*{Notation}

Denote $\R_+=[0,\iy)$. For $x\in\R$, let $x^{\pm}=\max(\pm x,0)$. For $n\in\N$, write $\mathbf{1}_n$ for the column vector in dimension $n$ all entries of which are $1$, and $\mathbf{I}_n$ for the $n\times n$ identity matrix.
Equip $\R^n$ with the Euclidean norm $|\cdot|$.

For a Polish space $(E,d)$, let $C(\R_+,E)$ be the space of continuous paths $\R_+\to E$ equipped with the topology of convergence uniformly on compacts.
Let $\calP(E)$ denote the set of probability measures on $E$, equipped with the topology of weak convergence.
For $\mu\in\calP(E)$, let $\calP_\sym(E^n;\mu)$ be the set of symmetric (i.e., permutation-invariant) probability measures on $E^n$ with marginal $\mu$. Denote by $\To$ convergence in distribution.

Let $C_+(\R_+,\R_+)$ denote the set of functions in $C(\R_+,\R_+)$ that are nondecreasing and start at 0. Let $C^{(n)}=C(\R_+,\R^n)$ and $\calC^{(n)}$ the Borel $\sig$-algebra on $C^{(n)}$.
Let $S^{(n)}$ denote the $n$-dimensional orthant $\R_+^n$ and $\calS^{(n)}$ the Borel $\sig$-algebra on $S^{(n)}$. Let $C_{S_n}(\R_+,\R^n)=\{f\in C(\R_+,\R^n):f(0)\in S_n\}$.
For $f:\R_+\to \R^n$, $0\le\del\le t$, let
\begin{align*}
\|f\|_t &=\sup\{|f(s)|:s\in[0,t]\}
\\
w_t(f,\del) &=\sup\{|f(s)-f(u)|:s,u\in[0,t],\,|s-u|\le\del\}.
\end{align*}
Finally, for $v\in\R^n$, $n\ge2$,
\[
\lan v\ran:=\frac{1}{n}\sum_{i\in[n]}v_i,
\qquad
\lan v\ran_i:=\frac{1}{n-1}\sum_{j\ne i}v_j.
\]
With this notation, \eqref{01a}, for example, can be written as
\begin{equation}\label{51}
X^n_i(t)=X^n_{0,i}+W^n_i(t)+L^n_i(t)+a\lan L^n(t)\ran_i.
\end{equation}

\section{Setting and results}\label{sec2}

For $b\in\R$ and $\sigma>0$, an $n$-dimensional BM with drift $b\mathbf{1}_n$ and diffusion matrix $\sig\mathbf{I}_n$, starting at $0$, will be called an $(n,b,\sig)$-BM, and a $(1,b,\sig)$-BM will be called a $(b,\sig)$-BM. We will consider here only SRBM driven by $(n,b,\sig)$-BM. Following \cite{rei-wil,tay-wil,wil98inv}, such an SRBM is defined as follows.

\begin{definition}[SRBM]
\label{def1}
Given $n\in\N$, $b\in\R$, $\sig>0$, an $n\times n$ matrix $R$ and a probability measure $\nu$ on $(S^{(n)},\calS^{(n)})$, an SRBM associated with the data $(n,b,\sig,R,\nu)$ is an $\{\calF_t\}$-adapted, $n$-dimensional process $X$ defined on some filtered probability space $(\Om,\calF,\{\calF_t\},\PP)$ such that $\PP$-a.s.,
\[
\text{$t\mapsto X(t)$ lies in $C(\R_+,S^{(n)})$ and }
X=X_0+W+RL,
\]
and under $\PP$,
\begin{itemize}
\item
$X_0$ has distribution $\nu$,
\item
$W$ is an $(n,b,\sig)$-BM and, for each $i\in[n]$, $\{W_i(t)-bt,t\ge0\}$ is an $\{\calF_t\}$-martingale,
\item
$L$ is an $n$-dimensional $\{\calF_t\}$-adapted process such that $\PP$-a.s.,
\begin{equation}\label{02}
\text{$t\mapsto L_i(t)$ lies in $C_+(\R_+,\R_+)$ and } \int_0^\iy X_i^n(t)dL_i^n(t)=0, \qquad i\in[n].
\end{equation}
\end{itemize}
\end{definition}
We will refer to $R$ and the {\it reflection matrix}, and to $L$ as the {\it boundary process}.

The existence and uniqueness in law of SRBM in the orthant was studied in \cite{rei-wil, tay-wil}, where a necessary and sufficient condition was given in terms of the following key property of the reflection matrix:

{\it An $n\times n$ matrix $R$ is said to be completely-$\calS$ if for every nonempty $J\subset[n]$ there exists a vector $(x_i)_{i\in J}\in\R_+^J$ such that}
\[
\sum_{j\in J}R_{ij}x_j>0 \ \text{for all } i\in J.
\]

The following was shown in \cite[Corollary 1.4]{tay-wil} and \cite[Theorem 3.1]{wil98inv}.
\begin{theorem}[\cite{tay-wil,wil98inv}]
\label{th-w}
Let $n\in\N$, $b\in\R$, $\sig>0$, $R$ an $n\times n$ matrix, and $\nu$ a probability measure on $(S^{(n)},\calS^{(n)})$. Then there exists an SRBM associated with $(n, b,\sig,R,\nu)$ if and only if $R$ is a completely-$\calS$ matrix, in which case uniqueness in law holds for the pair $(X,L)$, where $X$ is an SRBM and $L$ the corresponding boundary process.
\end{theorem}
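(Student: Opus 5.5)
This theorem is quoted from \cite{tay-wil,wil98inv}, and the paper will cite it rather than reprove it. If one did want to reconstruct the argument, I would follow the route of Taylor--Williams and Reiman--Williams: first necessity, then existence, then uniqueness in law.

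\emph{Necessity.} Suppose $R$ fails to be completely-$\calS$ for some nonempty $J\subset[n]$. By a theorem of the alternative (Ville/Gordan type), there is then a nonzero $y\in\R_+^J$ with $y^TR_{JJ}\le0$ componentwise. I would look at the process started near the face $F_J=\{x:x_i=0,\ i\in J\}$ and apply the functional $y^T X_J$. The increments of $y^TR_{JJ}L_J$ are nonpositive, while the contributions from $L_i$ with $i\notin J$ are small on short time intervals, because those coordinates stay away from zero. Hence $y^TX_J$ is dominated by the one-dimensional Brownian motion $y^TW_J$. That Brownian motion immediately becomes negative with positive probability, which contradicts $X_J\ge0$.

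\emph{Existence.} I would first construct the process for smooth or approximating data, for instance via a penalization scheme or by patching together local solutions near each face. Completely-$\calS$ gives, on each face, a direction $R_{JJ}x_J$ pointing into the interior. I would use it to build a Lyapunov-type test function $f_J(x)=\langle v_J, x\rangle$ near that face. Such functions yield uniform-in-approximation moment bounds on $L$, and hence tightness of the approximating pairs $(X,L)$. Any limit point is then shown to satisfy Definition~\ref{def1}, using the complementarity condition \eqref{02} and the martingale property.

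\emph{Uniqueness in law.} I would pose the submartingale problem in the sense of Stroock--Varadhan and Varadhan--Williams. The key estimate is that the time spent at non-smooth parts of the boundary, namely intersections of two or more faces, has Lebesgue measure zero, and that $L_i$ does not charge the set $\{X_j=0\}$ for $j\ne i$. With this in hand, every SRBM solves the same submartingale problem. Well-posedness of that problem then follows by localization plus uniqueness of the time-changed one-dimensional problems.

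This last step, proving that SRBM does not charge the lower-dimensional faces and then localizing, is the main obstacle. It is where the completely-$\calS$ structure is used most delicately, via an induction on $|J|$.

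For the purposes of this paper, however, the statement is used exactly as given. The only point to check is that $R=\mathbf I_n+\frac{a}{n-1}(\mathbf 1_n\mathbf 1_n^T-\mathbf I_n)$ is completely-$\calS$ if and only if $a>-1$. For a block $J$ with $|J|=k$, take $x=\mathbf 1_J$. Then
\[
(R_{JJ}\mathbf 1)_i=1+\frac{a(k-1)}{n-1}>0
\]
for all $k\le n$ exactly when $a>-1$. Conversely, when $a\le-1$ and $J=[n]$, the vector $y=\mathbf 1_n$ satisfies $y^TR\le0$, so the completely-$\calS$ condition fails.
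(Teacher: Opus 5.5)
Your proposal matches the paper: Theorem~\ref{th-w} is not proved there but simply quoted from \cite[Corollary 1.4]{tay-wil} and \cite[Theorem 3.1]{wil98inv}. Your check that $R^{(n)}$ is completely-$\calS$ iff $a>-1$ is exactly the argument of Lemma~\ref{lem01}: the test vector $\mathbf 1_J$ for one direction, and $y=\mathbf 1_n$, i.e.\ the paper's ``sum over $i$'' step, for the converse. Your reconstruction of the Taylor--Williams/Reiman--Williams argument goes beyond what the paper does and is only a heuristic outline. For instance, your necessity argument as written only rules out an SRBM started on or near the face $F_J$, not one started from an arbitrary $\nu$. Nothing in the paper relies on that reconstruction.
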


\subsection{Deterministic reflection}

We shall focus on \eqref{51}, in which $R$ is the $n\times n$ matrix
\begin{equation}\label{05}
R^{(n)}_{ij}=\begin{cases}
1& i=j\\
\frac{a}{n-1} &i\ne j.
\end{cases}
\end{equation}

\begin{lemma}\label{lem01}
Fix $n\ge2$. Then $R^{(n)}$ defined by \eqref{05} is completely-$\calS$ if and only if $a>-1$.
\end{lemma}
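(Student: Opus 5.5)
Both directions come from testing the completely-$\calS$ condition on symmetric vectors. The useful observation is that every principal submatrix of $R^{(n)}$ has the same structure: for $J\subset[n]$ with $|J|=m$, the submatrix $R_{JJ}$ has diagonal entries $1$ and off-diagonal entries $\frac{a}{n-1}$. Its row sums are therefore $1+\frac{(m-1)a}{n-1}$.

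\emph{Sufficiency.} Assume $a>-1$. Fix a nonempty $J$ with $|J|=m\in[n]$ and take $x_j=1$ for $j\in J$. Then for each $i\in J$,
\[
\sum_{j\in J}R^{(n)}_{ij}x_j=1+\frac{(m-1)a}{n-1}.
\]
If $a\ge0$ this is at least $1$. If $-1<a<0$, then $(m-1)/(n-1)\le1$, so the expression is at least $1+a>0$. In either case the completely-$\calS$ condition holds for $J$, and since $J$ was arbitrary, $R^{(n)}$ is completely-$\calS$.

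\emph{Necessity.} Assume $a\le-1$ and take $J=[n]$. Suppose $x\in\R_+^n$ satisfies $(R^{(n)}x)_i>0$ for all $i$. Summing over $i$, each $x_j$ appears once with coefficient $1$ and $n-1$ times with coefficient $\frac{a}{n-1}$, so
\[
\sum_i (R^{(n)}x)_i=(1+a)\sum_j x_j\le 0,
\]
since $x\ge0$ and $1+a\le0$. This contradicts the assumption that every summand is strictly positive. Hence no such $x$ exists for $J=[n]$, and $R^{(n)}$ is not completely-$\calS$.

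There is no genuine obstacle here. The only point needing care is that the necessity argument relies on summing the rows for the full index set $J=[n]$, where the row-sum identity is exact. It does not follow from testing only the single symmetric vector $x=\mathbf{1}_n$.
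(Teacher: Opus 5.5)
Your proof is correct and follows the paper's argument: the all-ones vector on each $J$ for sufficiency, and summing the rows over $J=[n]$ for necessity. The only difference is cosmetic. You compute the row sum $(1+a)\sum_j x_j$ exactly, whereas the paper first bounds $a/(n-1)$ by $-1/(n-1)$ and then sums to reach $0>0$. Your exact row sum $1+\frac{(m-1)a}{n-1}$ also avoids the paper's harmless strict-inequality slip in the case $|J|=1$.
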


\proof
Suppose $a>-1$. Let $J\subset[n]$. Then for the vector $x\in\R_+^J$ all the entries of which are $1$, one has for all $i\in J$,
\[
x_i+\sum_{j\in J,j\ne i}R^{(n)}_{ij}x_j > 1 -\sum_{j\in J,j\ne i}\frac{1}{n-1} = 1-\frac{|J|-1}{n-1}\ge0.
\]
Next, suppose $a\le -1$. If for some $x\in\R_+^n$ one has $x_i+\sum_{j\ne i}R^{(n)}_{ij}x_j>0$ for all $i$ then
\(
x_i-(n-1)^{-1}\sum_{j\ne i}x_j>0.
\)
Summing over $i$ gives a contradiction.
\qed

We now fix $a>-1$, $b\in\R$, $\sig>0$, and a probability measure $\mu_0$ on $(\R_+,\calS^{(1)})$, and for each $n$, a probability measure
$\nu^{(n)}\in\calP_\sym(S^{(n)};\mu_0)$. For each $n$ we let $(X^n,L^n,W^n,X^n_0)$ be an SRBM and the corresponding boundary process, BM and initial condition for the data $(n,b,\sig,R^{(n)},\nu^{(n)})$. We assume without loss of generality that they are defined on a common filtered probability space $(\Om,\calF,\{\calF_t\},\PP)$, for which expectation is denoted by $\E$. This completes our definition of a solution to \eqref{51}.

A solution to \eqref{MVSDE} is also assumed to be defined on $(\Om,\calF,\PP)$, and it is implicit that $\oo L$ is a boundary term, i.e., that $t\mapsto \oo L(t)$ lies in $C_+(\R_+,\R_+)$ and $ \int_{[0,\iy)} \oo X(t)d\oo L(t)=0$, and that $\la$ is finite and continuous.

Let
\begin{equation}\label{45}
\mu^n_t=\frac{1}{n}\sum_{i\in[n]}\del_{(X^n_i(t),L^n_i(t))},
\qquad
\la^n(t)=\frac{1}{n}\sum_{i\in[n]}L^n_i(t).
\end{equation}
Our first main result is as follows.

\begin{theorem}\label{th1} Let $a\in(-1,\iy)$.

i. There exists a pathwise unique strong solution $(\oo X,\oo L,\la)$ to \eqref{MVSDE}.

ii.
Let $k\in\N$. Then $(X^n_i,L^n_i)_{i\in[k]}\To(\oo X_i,\oo L_i)_{i\in[k]}$ in $C^{(2k)}$, where the latter tuple is given by $k$ independent copies of $(\oo X,\oo L)$. Moreover, $(\mu^n,\la^n)\to(\mu,\la)$ in $C(\R_+,\calP(\R_+^2))\times C^{(1)}$, in probability, where $\mu_t=\PP\circ (\oo X(t),\oo L(t))^{-1}$, $t\ge0$, and $\la$ is as in part (i), otherwise expressed as $\la(t)=\int_{\R_+^2}y\mu_t(dx,dy)$, $t\ge0$.
\end{theorem}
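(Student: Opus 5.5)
For part (i), I would use the one-dimensional Skorokhod map $\Gamma$ on $[0,\iy)$. For a continuous nondecreasing $\la$ with $\la(0)=0$, set $Y=\oo X_0+\oo W$ and $\oo L=\Psi(Y+a\la)$, where $\Psi(f)(t)=\sup_{s\le t}(f(s))^-$. Then \eqref{MVSDE} becomes the fixed-point equation
\[
\la=T(\la),\qquad T(\la)(t):=\E\,\Psi(Y+a\la)(t).
\]
Since $\Psi$ is $1$-Lipschitz in sup norm, $T$ is Lipschitz with constant $|a|$ on $C[0,t]$. This gives a contraction only when $|a|<1$, so for $a\ge1$ a different argument is needed.

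My approach is a monotonicity argument. For $a\ge0$, $\la\mapsto Y+a\la$ is increasing, so $T$ is nonincreasing in $\la$. For $a\in(-1,0)$, $T$ is nondecreasing. The key quantitative estimate should exploit the structure $\Psi(f+g)\ge\Psi(f)-\|g\|$ together with $\Psi(Y+a\la)+a\la\ge \Psi(Y)$-type identities. Concretely, for two candidates $\la_1,\la_2$ I would bound
\[
\bigl|(\Psi(Y+a\la_1)+a\la_1)-(\Psi(Y+a\la_2)+a\la_2)\bigr|.
\]
I would reduce to the scalar map $\la\mapsto \la-T(\la)$, or more cleverly the map $m\mapsto \E\Psi(Y+m)$ evaluated at $m=a\la$. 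The equation becomes $m=a\,\E\Psi(Y+m)=:aG(m)$, where $G$ is nonincreasing in $m$ and satisfies $G(m_1)-G(m_2)\le \|m_1-m_2\|$ in the appropriate one-sided, time-monotone sense. For $a>0$, $m\mapsto m-aG(m)$ is strictly increasing, so uniqueness follows. Existence then comes from a Picard or monotone iteration within the lattice of nondecreasing paths, combined with a continuity/compactness argument (Schauder or Arzel\`a--Ascoli, using moduli of continuity of $\E\Psi$). For $a\in(-1,0]$, I would iterate monotonically starting from $\la\equiv0$. Pathwise uniqueness and strong existence then follow, since $\oo X,\oo L$ are explicit functionals of $(\oo X_0,\oo W)$ once $\la$ is fixed.

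For part (ii), I would first derive tightness. Summing \eqref{51} over $i$ gives $(1+a)\la^n=\lan X^n\ran-\lan X^n_0\ran-\lan W^n\ran$. I would also use the representation $L^n_i=\Psi(X^n_{0,i}+W^n_i+a\lan L^n\ran_i)$, with $\lan L^n\ran_i=(n\la^n-L^n_i)/(n-1)$. Since $1+a>0$, this yields bounds on $\la^n$ (its increments are controlled by the moduli of $W$ averages and individual $W_i$). I would establish these via a comparison: $\la^n$ solves an averaged fixed-point equation $\la^n=\frac1n\sum_i\Psi(Y_i+a\la^n+O(\|L^n_i\|/n))$. Tightness of $\la^n$ in $C^{(1)}$ and of each $(X^n_i,L^n_i)$ then follows from Lipschitz continuity of $\Psi$.

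To identify the limit, I would couple. Let $\oo L_i=\Psi(X^n_{0,i}+W^n_i+a\la)$, and use Lipschitz bounds to obtain
\[
\|L^n_i-\oo L_i\|_t\le|a|\,\|\la^n-\la\|_t+O(1/n).
\]
Averaging, $\la^n-\la=\frac1n\sum_i(\Psi(Y_i+a\la^n)-\Psi(Y_i+a\la))+(\frac1n\sum_i\oo L_i-\la)+O(1/n)$. The middle term vanishes in probability by an LLN; the initial conditions are exchangeable rather than i.i.d., but the empirical measure converges to $\mu_0$, which suffices together with the independence of $W_i$. The difficulty is the first term, since $|a|$ may be $\ge1$. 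Here I would reuse exactly the uniqueness mechanism from (i), now applied to the empirical measure: the map $m\mapsto m-a\frac1n\sum_i\Psi(Y_i+m)$ is monotone with an inverse that is stable under perturbation, which forces $\la^n-\la\to0$ uniformly on compacts. Once $\|\la^n-\la\|_t\to0$ in probability, propagation of chaos for $k$ particles and convergence of $\mu^n$ follow from the coupling bound and the independence of $(X_{0,i},W_i)$ in the limit, via the convergence of the initial empirical measure.

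I expect the main obstacle to be the stability/uniqueness estimate for $a\ge1$, where there is no contraction. I would attack it first by a time-discretized monotone comparison: take the first time $\la_1$ and $\la_2$ separate by $\eps$ and derive a contradiction using the fact that $\Psi$ increments only when its argument hits its running minimum.
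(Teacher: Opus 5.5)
Your proposal has a genuine gap, and it sits where you placed the main obstacle. That obstacle is the core of the theorem: a uniqueness and stability estimate valid for every $a>-1$, in particular for $a\ge1$. There the sup-norm bound $\|T\la_1-T\la_2\|_t\le|a|\,\|\la_1-\la_2\|_t$ is not a contraction. Your existence sketch is workable: contraction for $|a|<1$, and Schauder on nondecreasing paths whose modulus is controlled by that of $\oo W$ for $a\ge0$. But your uniqueness argument in (i) and your stability bound $(1-|a|)\|\la^n-\la\|_t\le$ (LLN error) in (ii) cover only $|a|<1$.

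The order-monotonicity you offer as a substitute does not give uniqueness. On path space with the pointwise order, an order-preserving map $m\mapsto m-aG(m)$ need not be injective. For $a>0$ the antitone structure only shows that two solutions which are comparable on $[0,t]$ coincide there. Nothing forces $\la_1-\la_2$ to keep a sign. In (ii), the claim that $m\mapsto m-\frac an\sum_i\Psi(Y_i+m)$ has an inverse stable under perturbation is asserted, not derived, and monotonicity alone gives no quantitative stability.

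The first-separation-time fallback does not close the gap either. On $[\tau,\tau+h]$ it gives a factor $|a|$ times the probability of coming near the boundary during the window, and this tends to $\PP(\oo X(\tau)=0)$. That vanishes for $\tau>0$ but equals $\mu_0(\{0\})$ at $\tau=0$. So for $\mu_0=\delta_0$ and $a\ge1$ you get no contraction at the start. At the particle level you would also need such boundary bounds uniformly in $n$.

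The missing idea is an $L^2$ energy estimate that uses the Skorokhod structure rather than the Lipschitz property of $\Psi$. Take two solutions driven by the same $(\oo X_0,\oo W)$ and set $V=\oo X-\tilde X$. This $V$ is continuous and of bounded variation, with $V(t)^2=2\int_0^tV\,dV$. The reflection terms contribute $-2\int\tilde X\,d\oo L-2\int\oo X\,d\tilde L\le0$. Since $\E V=(1+a)(\oo\la-\tilde\la)$,
\[
\E V(t)^2\le\frac{2a}{1+a}\int_0^t\E V\,d\,\E V=\frac{a}{1+a}(\E V(t))^2\le\frac{a}{1+a}\E V(t)^2 ,
\]
and $\frac{a}{1+a}<1$ for all $a>-1$; for $a\le0$ the middle term is already $\le0$. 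This is the paper's uniqueness proof.

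For (ii) the paper repeats this computation with $\E$ replaced by the empirical average $\lan\cdot\ran$, using $\lan V\ran^2\le\lan V^2\ran$. It uses this to compare $X^n$ with the penalized system $Y^{\eps,n}$ uniformly in $n$, and takes propagation of chaos for the penalized system from classical results for Lipschitz drifts. Your synchronous coupling, with $\oo X_i$ driven by $(X^n_{0,i},W^n_i)$, can be repaired the same way. With $V_i=X^n_i-\oo X_i$ and $E=\la-\lan\oo L\ran$, one gets, up to an error of order $1/n$,
\[
\lan V^2\ran(t)\le\frac{a}{1+a}\lan V\ran(t)^2-\frac{2a}{1+a}\int_0^t\lan V\ran\,dE .
\]
The last term is handled by integrating by parts and applying your LLN to $E$. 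Without this estimate, or some other substitute, neither pathwise uniqueness in (i) nor $\la^n\to\la$ in (ii) is established for $a\ge1$.
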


\subsection{Random reflection}

For this setting, the environment is represented by a probability space $(\Xi,\calA,Q)$ on which are defined, for each $n\ge2$, a tuple $\{\rho^n_{ij}:i,j\in[n], i\ne j\}$, of $n(n-1)$ real-valued random variables. They are used for defining the random reflection matrix
\[
\hat R^{(n)}_{ij}=\begin{cases}1 & i=j \\ \frac{\rho^n_{ij}}{n-1} & i\ne j.
\end{cases}
\]

\begin{assumption}\label{assn2}
i.
For some $\eps_\rho>0$ and every $\xi\in\Xi$, $n\ge2$, $i,j\in[n]$, $i\ne j$, one has $|\rho^n_{ij}(\xi)|\le 1-\eps_\rho$.

ii.
The mean $a:=\int\rho^n_{ij}(\xi)Q(d\xi)$ does not depend on $n$, $i$, $j$.

iii. For every $n$ and $i\in[n]$, the $n-1$ random variables $\rho^n_{ij}$, $j\in[n]$, $j\ne i$, are mutually independent under $Q$.
\end{assumption}

Under Assumption \ref{assn2}, SRBM is well defined, for it is easy to see, along the lines of the proof of Lemma \ref{lem01}, that the completely-$\calS$ condition holds for all $\xi$. However, we will rely on a stronger result, which guarantees existence of strong solutions.

Let $\calR^{(n)}$ denote the set of $n\times n$ matrices of the form $R=I+A$, with $I=\mathbf{I}_n$ the identity matrix and $A$ a matrix such that $|A|$, the matrix whose entries are absolute values of those of $A$, has spectral radius $<1$.

\begin{theorem}[\cite{har-rei, wil98inv}]
\label{th-hr}
For $n\in\N$, there exists a map $\Gamma^{(n)}:C_{S_n}(\R_+,\R^n)\times\calR^{(n)}\to C^{(2n)}$ such that whenever
$(\hat\Xi,\hat\calA,\hat Q)$ is a probability space supporting $X_0\sim\nu^{(n)}$, $W$ an $(n,b,\sig)$-BM independent of $X_0$, and $\{\hat\calA_t\}$ is the filtration generated by $X_0+W(t)$, $t\in\R_+$, then, upon setting $(X,L)=\Gamma^{(n)}(X_0+W;R)$, the tuple $(X_0,W,X,L)$ satisfies all requirements in Definition \ref{def1}, but with the filtered probability space being $(\hat\Xi,\hat\calA,\{\hat\calA_t\},\hat Q)$. Moreover, $\Gamma^{(n)}$ is jointly measurable in both arguments.
\end{theorem}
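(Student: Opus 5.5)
This is the Harrison--Reiman construction, extended to account for the dependence on $R$. The plan is to build $\Gamma^{(n)}$ as the fixed point of a contraction, so that it is a limit of explicitly measurable maps.

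\textbf{Step 1: the Skorokhod map.} Let $\psi:C_{S_1}(\R_+,\R)\to C_+(\R_+,\R_+)$ be the one-dimensional Skorokhod map,
\[
\psi(f)(t)=\sup_{s\le t}f(s)^-.
\]
It is Lipschitz in the sense that
\[
\|\psi(f)-\psi(g)\|_t\le\|f-g\|_t .
\]

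\textbf{Step 2: the fixed-point formulation.} Write $R=I+A$ with $A\in\calR^{(n)}$; the diagonal of $A$ is zero, as in \eqref{05}. Given $x\in C_{S_n}(\R_+,\R^n)$, we seek a continuous, componentwise nondecreasing $L$ satisfying
\[
L_i=\psi\big(x_i+(AL)_i\big),\qquad i\in[n].
\]
Define the map $\Pi_{x,R}(\ell)_i=\psi(x_i+(A\ell)_i)$ on $C(\R_+,\R^n)$. By Step 1, componentwise on $[0,t]$,
\[
\|\Pi(\ell)_i-\Pi(\ell')_i\|_t\le\sum_j|A_{ij}|\,\|\ell_j-\ell'_j\|_t .
\]
So in the vector of sup-norms, $\Pi$ is dominated by $|A|$. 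Since $\rho(|A|)<1$, Perron--Frobenius gives a positive vector $v$ and some $\theta<1$ with $|A|v\le\theta v$; one may perturb $|A|$ to a strictly positive matrix to get strict positivity of $v$. Then $\Pi$ is a $\theta$-contraction in the weighted norm $\max_i\|\ell_i\|_t/v_i$, for every $t$. Hence there is a unique fixed point $L$ on every $[0,t]$, and these are consistent in $t$.

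Set $X=x+RL$. By construction $X=x+AL+L$, so $X_i=\big(x_i+(AL)_i\big)+\psi\big(x_i+(AL)_i\big)\ge0$. Moreover $L_i$ increases only when $X_i=0$, which gives the complementarity condition \eqref{02}, and $L_i(0)=0$ because $x(0)\in S^{(n)}$. Define $\Gamma^{(n)}(x;R)=(X,L)$.

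\textbf{Step 3: measurability and adaptedness.} Start the iteration at $\ell^0=0$ and put $\ell^{k+1}=\Pi_{x,R}(\ell^k)$. Each map $(x,A)\mapsto\ell^k$ is continuous, since $\psi$ is continuous and $(x,A,\ell)\mapsto x+A\ell$ is continuous. The iterates converge pointwise to $L$, so $\Gamma^{(n)}$ is jointly Borel measurable as a pointwise limit of continuous maps.

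The same recursion gives non-anticipation. $\ell^k|_{[0,t]}$ depends only on $x|_{[0,t]}$, because $\psi$ is non-anticipating, and this property passes to the limit. Therefore $L$, and hence $X$, is adapted to the filtration $\{\hat\calA_t\}$ generated by $X_0+W$.

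\textbf{Step 4: the probabilistic requirements.} Set $x=X_0+W$; this lies in $C_{S_n}$ almost surely. The filtration $\hat\calA_t$ makes $W_i(t)-bt$ a martingale, because $X_0$ is independent of $W$ and $W$ has independent increments. Since $\hat\calA_0=\sigma(X_0)$, the process $W$ is recovered adaptedly as $x-x(0)$. Hence all items of Definition \ref{def1} hold on $(\hat\Xi,\hat\calA,\{\hat\calA_t\},\hat Q)$.

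\textbf{Main obstacle.} The contraction step is the delicate point: we need a single weighted norm valid uniformly in $t$, derived from the spectral radius condition on $|A|$ rather than on a norm of $A$. The perturbation argument in Step 2 handles this. The measurability is then routine via the iteration.
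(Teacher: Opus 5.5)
Your proposal is correct and is essentially the paper's argument. The paper cites the Harrison--Reiman contraction, notes via Williams that it only needs $\rho(|A|)<1$ for signed $A$ (which is exactly what your Perron--Frobenius weighted norm uses), and obtains joint measurability from the contraction, a step your Picard-iteration argument makes explicit. One harmless slip: for general $R\in\calR^{(n)}$ the diagonal of $A$ need not vanish, but your fixed-point equation $L_i=\psi(x_i+(AL)_i)$ and the contraction bound remain valid when $A_{ii}\neq0$.
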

\proof
When all entries of $A$ are nonpositive, the first assertion of the theorem is found in \cite{har-rei}. For signed $A$, it was noticed in \cite[Section 7]{wil98inv} that the proof from \cite{har-rei} continues to hold. As for joint measurability, this follows from the fact that $\Gamma^{(n)}$ (precisely, its second component) was constructed in \cite{har-rei} as the unique fixed point of a contraction $\pi=\pi(\cdot\,;R)$ on $C([0,T],\R^n)$, which itself is jointly measurable on $C([0,T],\R^n)\times\calR^{(n)}$.
\qed

Let
\[
(\hat\Xi,\hat\calA):=\Big(\prod_{n\ge2}S^{(n)}\times C^{(3n)},\prod_{n\ge2}\calS^{(n)}\otimes\calC^{(3n)}\Big),
\]
and denote the canonical process by $(\hat X^n_0,\hat W^n,\hat X^n,\hat L^n)_{n\ge2}$.
For every $\xi\in\Xi$, let $Q_\xi$ be a probability measure on $(\hat\Xi,\hat\calA)$ under which, for each $n$, $\hat X^n_0\sim\nu^{(n)}$, $\hat W^n$ is an $(n,b,\sig)$-BM independent of $\hat X^n_0$, and $Q_\xi$-a.s.,
\[
(\hat X^n,\hat L^n)=\Gamma^{(n)}(\hat X^n_0+\hat W^n;\hat R^{(n)}(\xi)).
\]
Above, $\Gamma^{(n)}(\,\cdot\,; \hat R^{(n)}(\xi))$ is well-defined for every $\xi$, owing to Assumption \ref{assn2}(i) which implies that, for every $\xi\in\Xi$, $\hat R^{(n)}(\xi)\in\calR^{(n)}$.
In view of the first assertion of Theorem \ref{th-hr}, the tuple $(\hat X^n_0,\hat W^n,\hat X^n,\hat L^n,\rho^n(\xi))$ satisfies \eqref{01b} $Q_\xi$-a.s., for every $\xi$. The measure $Q_\xi$ thus defined is called the {\it quenched} measure.

Next, let $(\mathbb{X},\mathbb{A})=(\Xi\times\hat\Xi,\calA\otimes\hat\calA)$. Note that $\xi\mapsto Q_\xi(\hat A)$ is $\calA$-measurable for every $\hat A\in\hat\calA$, owing to the joint measurability of $\Gamma^{(n)}$ stated in Theorem \ref{th-hr}. Hence one can define the {\it annealed} measure $\mathbb{Q}$ as the probability measure on $(\mathbb{X},\mathbb{A})$ satisfying
\begin{equation}\label{x9}
\mathbb{Q}(A\times\hat A)=\int_A Q_\xi(\hat A) Q(d\xi),
\qquad A\in\Xi,\,\hat A\in\hat\Xi.
\end{equation}
On this space, we let $H(\xi,\hat\xi)=H(\hat\xi)$ for $H=\hat X^n_0$, $\hat W^n$, $\hat X^n$ and $\hat L^n$, whereas $\rho^n(\xi,\hat\xi)=\rho^n(\xi)$.

Let $\hat\mu^n$ and $\hat\la^n$ be defined analogously to \eqref{45}, with $(\hat X^n,\hat L^n)$ replacing $(X^n,L^n)$. Our second main result is the following.

\begin{theorem}\label{th2}
Let Assumption \ref{assn2} hold. Let $k\in\N$. Let $(\oo X_i,\oo L_i)$, $\mu$  and $\la$ be as in Theorem~\ref{th1}. Then under $\mathbb{Q}$, as well as for $Q$-a.e.\ $\xi\in\Xi$, under $Q_\xi$, one has $(\hat X^n_i,\hat L^n_i)_{i\in[k]}\To(\oo X_i,\oo L_i)_{i\in[k]}$ in $C^{(2k)}$ and $(\hat\mu^n,\hat\la^n)\to(\mu,\la)$ in $C(\R_+,\calP(\R^2))\times C^{(1)}$ in probability.
\end{theorem}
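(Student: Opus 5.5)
The argument reduces Theorem~\ref{th2} to a comparison with an auxiliary homogeneous system, where Theorem~\ref{th1} applies. Write the heterogeneous system as
\[
\hat X^n_i=\hat X^n_{0,i}+\hat W^n_i+\hat L^n_i+a\lan\hat L^n\ran_i+E^n_i,
\qquad
E^n_i(t):=\frac{1}{n-1}\sum_{j\ne i}(\rho^n_{ij}-a)\hat L^n_j(t).
\]
Thus $(\hat X^n,\hat L^n)$ solves the homogeneous system \eqref{51} driven by the perturbed input $\hat X^n_0+\hat W^n+E^n$. The plan is to show that the perturbation is negligible, $\frac1n\sum_i\|E^n_i\|_T\to0$ in probability, in both the quenched and the annealed senses. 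We then compare $(\hat X^n,\hat L^n)$ with the homogeneous solution $(X^n,L^n)$ driven by the same $\hat X^n_0+\hat W^n$ and invoke Theorem~\ref{th1}(ii).

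\textbf{Step 1: stability of the homogeneous Skorokhod map.} We need a Lipschitz estimate for the map $\Gamma^{(n)}(\cdot\,;R^{(n)})$ that is uniform in $n$ when measured in the averaged norm $\frac1n\sum_i\|\cdot\|_T$. For $a\in(-1,1)$ this follows from the Harrison--Reiman contraction. The map $\ell\mapsto(\sup_{s\le t}(-y_i(s)-a\lan\ell(s)\ran_i)^+)_i$ has averaged Lipschitz constant $|a|<1$, since $\frac1n\sum_i\|\lan\ell\ran_i\|\le\frac1n\sum_j\|\ell_j\|$. Hence $\frac1n\sum_i\|L_i-L'_i\|_T\le(1-|a|)^{-1}\frac1n\sum_i\|y_i-y'_i\|_T$.

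Assumption~\ref{assn2}(i) guarantees $|a|\le1-\eps_\rho<1$. It also yields the a priori bound $\frac1n\sum_i\|\hat L^n_i\|_T\le\eps_\rho^{-1}\frac1n\sum_i\|\hat X^n_{0,i}+\hat W^n_i\|_T$. The right-hand side has bounded expectation. Strictly it requires $\hat X^n_{0,i}$ integrable, or else a truncation argument, since only the weak convergence of $\mu_0$ is assumed.

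\textbf{Step 2: negligibility of $E^n$ (main obstacle).} The difficulty is that $\hat L^n_j$ depends on the whole environment $\rho^n$, including $\rho^n_{i\cdot}$, so a conditional LLN cannot be applied directly. The plan is a leave-one-row-out decoupling.

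Let $\tilde L^{n,(i)}$ be the solution with row $i$ of $\rho^n$ replaced by the constant $a$. Changing one row alters only the input to coordinate $i$, and only by $O(1)$ times averages of $L$. By Step 1, the averaged change satisfies $\frac1n\sum_j\|\hat L^n_j-\tilde L^{n,(i)}_j\|_T\le C n^{-1}\cdot O(1)$, up to the $j=i$ term, which is itself $O(1/n)$ in average. Therefore
\[
E^n_i=\frac{1}{n-1}\sum_{j\ne i}(\rho^n_{ij}-a)\tilde L^{n,(i)}_j+O\Big(\frac1n\sum_j\|\hat L^n_j-\tilde L^{n,(i)}_j\|_T\Big).
\]
Now $\tilde L^{n,(i)}$ is independent of $(\rho^n_{ij})_{j\ne i}$ under $\mathbb Q$. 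By Assumption~\ref{assn2}(iii) the first term is a centered sum of independent bounded weights against fixed paths. Its second moment at fixed $t$ is $O(n^{-1})\E\frac1n\sum_j\tilde L_j(T)^2$. Monotonicity of $L$ in $t$, together with a time grid and the modulus estimate $w_T(\hat L^n_j,\del)\lesssim$ averaged moduli of $\hat W^n$, upgrades this to a bound uniform on $[0,T]$.

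Averaging over $i$ gives $\E_{\mathbb Q}\frac1n\sum_i\|E^n_i\|_T\to0$, which is the annealed statement. The worry is whether the one-row perturbation bound is genuinely $O(1/n)$ in the averaged norm. I expect it to follow from Step 1 applied with the perturbation supported on a single coordinate.

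\textbf{Step 3: quenched statement and conclusion.} For the quenched result, note that the quantity $\E_{Q_\xi}\frac1n\sum_i\|E^n_i\|_T$ is a function of $\xi$ whose $Q$-mean tends to $0$. Convergence in $Q$-probability is immediate. To get $Q$-a.e.\ convergence, I would establish a variance bound $O(n^{-2})$ by using fourth moments in the decoupling, or else bounded differences in the rows (Assumption~\ref{assn2}(iii)), and then apply Borel--Cantelli along all $n$.

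Once $E^n$ is negligible, Step 1 gives $\frac1n\sum_i\|\hat L^n_i-L^n_i\|_T+\frac1n\sum_i\|\hat X^n_i-X^n_i\|_T\to0$. Exchangeability under $\mathbb Q$, and approximate exchangeability under $Q_\xi$ handled through empirical measures, converts this into convergence for the first $k$ coordinates and for $(\hat\mu^n,\hat\la^n)$. The limits are then those of Theorem~\ref{th1}(ii).
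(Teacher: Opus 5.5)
Your Step 2 has a genuine gap. It rests on the claim that $\tilde L^{n,(i)}$ is independent of $(\rho^n_{ij})_{j\ne i}$ under $\mathbb{Q}$, and the hypotheses do not give this.

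\begin{itemize}
\item $\tilde L^{n,(i)}$ is a functional of $(\hat X^n_0,\hat W^n)$ and of the rows $(\rho^n_{k\cdot})_{k\ne i}$. Its independence from row $i$ therefore needs row $i$ of the environment to be independent of all other rows.
\item Assumption~\ref{assn2}(iii) gives independence only \emph{within} each row. It allows symmetric coefficients $\rho^n_{ij}=\rho^n_{ji}$, or column-constant ones $\rho^n_{ij}=\eta_j$ with $\eta_j$ i.i.d. In these cases the entries of row $i$ reappear in every other row, and your conditional centered-sum bound fails.
\end{itemize}

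The missing idea is to put the fluctuation on the \emph{homogeneous} solution $L^n$, which under $\mathbb{Q}$ is independent of the whole environment, rather than on $\hat L^n$. Split $\frac{1}{n-1}\sum_{j\ne i}\rho_{ij}\hat L_j-a\lan L\ran_i=B_i+C_i$, with $B_i=\frac{1}{n-1}\sum_{j\ne i}\rho_{ij}(\hat L_j-L_j)$ and $C_i=\frac{1}{n-1}\sum_{j\ne i}(\rho_{ij}-a)L_j$.

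\begin{itemize}
\item The one-dimensional Skorokhod map and $|\rho_{ij}|\le1-\eps_\rho$ give $\|\Delta L_i\|_t\le(1-\eps_\rho)\lan\|\Delta L\|_t\ran_i+\|C_i\|_t$.
\item Hence $\max_i(\|\Delta L_i\|_t\vee\|\Delta X_i\|_t)\le c\max_i\|C_i\|_t$.
\item Conditionally on $L$, $C_i$ is a sum of independent, bounded, centered terms. So Azuma applies using only within-row independence.
\end{itemize}

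Your homogeneous-map reformulation cannot perform this absorption. Re-centering $E^n_i$ at $L$ leaves a remainder with coefficients $\rho_{ij}-a$, of size up to $2(1-\eps_\rho)$. Combined with your Lipschitz constant $(1-|a|)^{-1}$, this is in general not a contraction. The contraction must come from the heterogeneous map with the coefficients $\rho_{ij}$ themselves.

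Your Step 3 has two further problems.
\begin{itemize}
\item You pass from averaged closeness to the first $k$ coordinates via exchangeability. But the law of $\rho^n$ is not assumed permutation invariant, so $(\hat X^n,\hat L^n)$ need not be exchangeable under $\mathbb{Q}$, and it is not exchangeable under $Q_\xi$. You need coordinatewise, e.g.\ maximal, control.
\item The $Q$-a.e.\ quenched statement rests on a variance bound that you do not prove.
\end{itemize}

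The paper handles both at once, as follows.
\begin{itemize}
\item It uses a time grid of mesh $\del=n^{-1}$ and a truncation level $y=n^{1/4}$.
\item Azuma, the sub-Gaussian bounds on $L_j(t)$ and $w_t(L_j,\del)$ from Lemma~\ref{lem08}, and a union bound over $i$ make $\mathbb{Q}(\max_i\|C_i\|_t>x)$ summable in $n$.
\item Borel--Cantelli then gives $\max_i(\|\Delta X^n_i\|_t\vee\|\Delta L^n_i\|_t)\to0$ $\mathbb{Q}$-a.s. This covers the $k$-tuple, $\hat\la^n$, and $\hat\mu^n$ (through $\calW_1$).
\item The quenched statement follows by taking $\xi$-sections of this full-measure set.
\end{itemize}

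A minor point: Step 1 needs no integrability of $\hat X^n_0$. Since $\hat X^n_{0,i}\ge0$, one has $(\hat X^n_{0,i}+w)^-\le w^-$, so all a priori bounds involve only $\|\hat W^n_i\|_t$.
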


{\it Remark.}
Whereas working with the strong solution makes the construction of the quenched and annealed measures straightforward, this is not the motivation for imposing the assumption $|\rho^n_{ij}|\le 1-\eps_\rho$. Rather, this condition is required for the proof of Theorem \ref{th2}, which relies on coupling with the dynamics under deterministic reflection. It is in this step that the bound is used in an essential way (see, in particular, the proof of Proposition \ref{prop5}).

{\it Open problem.}
Extend Theorem \ref{th2} by replacing Assumption \ref{assn2}(i), i.e.\ $|\rho^n_{ij}|\le1-\eps_\rho$, with the weaker condition $-1<c_1\le\rho^n_{ij}\le c_2<\iy$.

\section{The case of deterministic reflection coefficients}

Here we present the proof of Theorem \ref{th1}, which proceeds in four steps established by Propositions \ref{prop1}-\ref{prop4} below. The approach is based on the penalty method by which the reflected dynamics are approximated by diffusion in all of $\R^n$ with a penalty term. This makes it possible to take advantage of a classical result for weakly interacting diffusions with a Lipschitz drift, by which the mean field limit exists and is given by the McKean--Vlasov diffusion with penalized dynamics (Proposition \ref{prop2}). Before applying this tool we argue the existence of a unique solution to \eqref{MVSDE} (Proposition \ref{prop1}). We then show that as the penalty parameter diminishes, both the $n$-dimensional system and the one-dimensional McKean--Vlasov diffusion converge to their reflected counterparts (Propositions \ref{prop3} and \ref{prop4}, respectively).

We now state Propositions \ref{prop1}-\ref{prop4}. Their proofs appear in Sections \ref{sec31}-\ref{sec34}, respectively.

\begin{proposition}\label{prop1}
There exists a pathwise unique strong solution to \eqref{MVSDE}.
\end{proposition}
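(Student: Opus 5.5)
We prove Proposition \ref{prop1} by a fixed-point argument in the deterministic function $\la$, combined with the one-dimensional Skorokhod map. For a continuous function $g$ with $g(0)\ge0$, let $\Phi(g)(t)=\sup_{s\le t}(g(s))^-$. This is the regulator of the Skorokhod problem on $\R_+$, so that $g+\Phi(g)\ge0$ and $\Phi(g)$ increases only when $g+\Phi(g)=0$. Uniqueness of the Skorokhod decomposition shows that $(\oo X,\oo L,\la)$ solves \eqref{MVSDE} if and only if
\[
\oo L=\Phi(\oo X_0+\oo W+a\la),\qquad \la(t)=\E\,\Phi(\oo X_0+\oo W+a\la)(t).
\]
This requires $\la$ to be nondecreasing and continuous. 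It will be, as the expectation of nondecreasing continuous processes, once integrability is checked; note that $\Phi(\oo X_0+\oo W+a\la)(t)\le\|\oo W\|_t+|a|\,\|\la\|_t$. The problem therefore reduces to a deterministic fixed-point equation $\la=T\la$ on $C_+(\R_+,\R_+)$, where $T\la(t):=\E\,\Phi(\oo X_0+\oo W+a\la)(t)$. Once $\la$ is fixed, pathwise uniqueness and the strong-solution property follow, because $\oo L$ is then an explicit measurable functional of $(\oo X_0,\oo W)$.

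When $a\in(-1,1)$, the naive Lipschitz estimate $|\Phi(g)-\Phi(h)|_t\le\|g-h\|_t$ gives $\|T\la-T\la'\|_t\le|a|\,\|\la-\la'\|_t$. Hence $T$ is a contraction on $C([0,t])$ for every $t$, and Banach's theorem yields existence and uniqueness.

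For $a\ge1$, and in fact for all $a>-1$ uniformly, we exploit monotonicity.

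For $a\ge0$, the map $T$ is monotone. If $\la\le\la'$ pointwise and both are nondecreasing, then $\oo X_0+\oo W+a\la\le\oo X_0+\oo W+a\la'$, and $\Phi$ is antitone under pointwise order. Moreover, the difference $a(\la'-\la)$ enters through $\sup_s(\cdot)^-$, which gives the sharper bound
\[
0\le T\la(t)-T\la'(t)\le a\,\E\big[\sup_{s\le t}(\la'-\la)(s)\,\mathbf 1_{\{\oo L\text{ has increased by time }t\}}\big].
\]
This bound alone is not a contraction. Our first approach is to change unknowns instead. Write $\la=\E\oo L$ and use the identity $\oo L=\Phi(\oo X_0+\oo W+a\la)$ with $a\la$ nondecreasing, so that $a\la$ acts as a pushing term. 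In this representation $\oo L(t)=\sup_{s\le t}\big(\oo X_0+\oo W(s)+a\la(s)\big)^-$, and taking expectations,
\[
\la(t)=\E\sup_{s\le t}\big(\oo X_0+\oo W(s)+a\la(s)\big)^-.
\]
For $a>0$ the right side is nonincreasing in $\la$. For a given $\la$, the equation $\la=T\la$ therefore has exactly one fixed point by a monotonicity argument: if $\la_1,\la_2$ are both fixed points and $\la_1(t_0)<\la_2(t_0)$ at the first relevant time, then $T$ reverses the order, giving a contradiction. I would make this rigorous by setting $\tau=\inf\{t:\la_1(t)\neq\la_2(t)\}$ and using the bound above on a short interval $[\tau,\tau+\del]$. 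There, $\PP(\oo L\text{ increases on }[\tau,\tau+\del])$ can be made small unless the process sits at $0$. This is the delicate point.

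A cleaner route, which I would actually pursue for all $a>-1$, is a time-rescaling. Set $\eta(t)=(1+a)\la(t)$, and observe the following for the particle system: summing \eqref{51} over $i$ shows that the average pushing is $(1+a)\la^n$. In the limit, $\oo L+a\la$ is the total push. Introduce the unknown $m=\la$ and the map $T$. Existence then follows via Schauder or Picard iteration on $[0,t]$ in the norm $\|\cdot\|_t$ weighted by $e^{-\beta t}$, where the key estimate is $|T\la-T\la'|(t)\le|a|\,\E[\sup_{s\le t}|\la-\la'|(s)\mathbf 1_{\{\tau_0\le t\}}]$. Here $\tau_0$ is the first hitting time of $0$ by $\oo X_0+\oo W+a\la\wedge\la'$, and $\PP(\tau_0\le t)\to\PP(\oo X_0=0)$ as $t\to0$. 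This gives a contraction on short intervals if $|a|\PP(\oo X_0=0)<1$.

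The main obstacle is handling $a\ge1$, and in particular atoms of $\mu_0$ at $0$. For that case I would try restarting the argument at each time and using the fact that the law of $\oo X(\tau)$ has no atom at $0$ for $\tau>0$, since $\oo W$ has $\sig>0$. Restricted to the initial layer, the case $\PP(\oo X_0=0)>0$ with $a\ge1$ would be treated by showing directly that $\la$ solves a scalar Volterra-type equation whose short-time behaviour is governed by $\sqrt t$ scaling. Existence and uniqueness near $0$ would then be obtained from monotonicity of $T$.
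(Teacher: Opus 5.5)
Your reduction to the deterministic equation $\la=T\la$ via the Skorokhod map is correct. For $a\in(-1,1)$ the bound $\|T\la-T\la'\|_t\le|a|\,\|\la-\la'\|_t$ gives existence and uniqueness by Banach's theorem. That is a legitimate, more elementary alternative to the paper's argument in that range.

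There is a genuine gap for $a\ge1$, where the proposition is still claimed and your proposal does not establish it. The localized bound $|T\la-T\la'|(t)\le a\,\PP(\tau_0\le t)\,\|\la-\la'\|_t$ is correct, but it yields a contraction near a time $\tau$ only if $a\,\PP(\oo X(\tau)=0)<1$. At $\tau=0$ this fails exactly when $\mu_0(\{0\})\ge1/a$, and you leave that case to an unspecified Volterra/$\sqrt t$ analysis. The weight $e^{-\beta t}$ buys nothing, because the right side involves the running supremum $\|\la-\la'\|_t$ rather than a time integral. Continuing existence past the first interval needs a bound on $\PP(\oo X(t_0)<h)$ that is uniform in $t_0$, and you do not supply one. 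Finally, antitonicity of $T$ only excludes two distinct \emph{ordered} fixed points, and two candidate solutions need not be ordered after they separate.

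The missing idea is an argument that is insensitive both to atoms of $\mu_0$ at $0$ and to the size of $a$; the paper's uniqueness proof is of this kind. Take two solutions driven by the same $(\oo X_0,\oo W)$ and set $V=\oo X-\tilde X$, so that $v:=\E V=(1+a)(\oo\la-\tilde\la)$. Since $V$ has finite variation, and $\int V\,d\oo L=-\int\tilde X\,d\oo L\le0$ while $\int V\,d\tilde L=\int\oo X\,d\tilde L\ge0$, one gets $V(t)^2\le\frac{2a}{1+a}\int_0^tV\,dv$. Taking expectations gives $\E V(t)^2\le\frac{a}{1+a}v(t)^2$. This is $\le0$ if $a\le0$. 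For $a>0$, Jensen's inequality $v^2\le\E V^2$ gives $\E V(t)^2\le\frac{a}{1+a}\E V(t)^2$. Either way $V\equiv0$, and then $\oo\la=\tilde\la$.

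For existence, the paper takes a subsequential limit of the particle system with i.i.d.\ initial data, using the tightness and moment bounds of Lemma \ref{lem03}. Within your framework you could instead use Schauder's theorem for $a\ge0$ on a fixed horizon $[0,t]$. There $T$ is $a$-Lipschitz, and because $a\la$ is nondecreasing, $T$ maps nondecreasing $\la$ with $\la(0)=0$ into the compact convex set of such functions satisfying $\la(s)\le\E\|\oo W\|_s$ and $\la(t_2)-\la(t_1)\le\E\,w_t(\oo W,t_2-t_1)$ for $t_1\le t_2\le t$. Combined with the $L^2$ uniqueness argument, this would complete your proof for all $a>-1$.
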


To construct an approximating system via the penalty method, let $Y^n=Y^{\eps,n}$ be defined as the unique strong solution to the following SDE in $\R^n$
\begin{equation}\label{52}
\begin{split}
Y^n_i(t)&=X^n_{0,i}+W^n_i(t)+\La^n_i(t)
+a\lan \La^n(t) \ran_i
\\
\La^n_i(t) &= \int_0^t\frp_\eps(Y^n_i(s))ds
\end{split}
\end{equation}
where, for $\eps>0$, we take $\frp_\eps:\R\to[0,\eps^{-1}]$ to be the Lipschitz function
\[
\frp_\eps(x)=\begin{cases}\eps^{-1} & x\le-\eps\\ -\eps^{-2}x & -\eps<x<0 \\ 0 & x\ge0.
\end{cases}
\]
Note that we have defined $Y^n$ so that it is driven by the same BM as $X^n$ in \eqref{51}, which clearly can be done thanks to the existence of a strong solution to \eqref{52}. The McKean--Vlasov diffusion in this case is given by $\oo Y=\oo Y^\eps$,
\begin{equation}\label{23}
\begin{split}
\oo Y(t)&=\oo X_0+\oo W(t)+\oo\La(t)+a\,\E\oo\La(t)
\\
\oo\La(t)&=\int_0^t\frp_\eps(\oo Y(s))ds.
\end{split}
\end{equation}
The following result is classical; Section \ref{sec32} points to the literature where the proof is found.

\begin{proposition}\label{prop2}
Fix $\eps>0$. There exists a pathwise unique strong solution to \eqref{23}. Moreover, for $k\in\N$, $(Y^n_i,W^n_i)_{i\in[k]}\To(\oo Y_i,\oo W_i)_{i\in[k]}$ in $C^{(2k)}$ as $n\to\iy$, where $(\oo Y_i,\oo W_i)$ are $k$ independent copies of $(\oo Y,\oo W)$.
\end{proposition}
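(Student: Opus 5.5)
The plan is to recast \eqref{23} as a McKean--Vlasov SDE with Lipschitz coefficients and apply the classical Sznitman-type theory. Write $b_\eps(y,m):=\frp_\eps(y)+a\int\frp_\eps(z)\,m(dz)$. Differentiating \eqref{23} gives
\[
d\oo Y(t)=d\oo W(t)+b_\eps\big(\oo Y(t),\mathrm{Law}(\oo Y(t))\big)\,dt .
\]
Similarly, \eqref{52} reads $dY^n_i=dW^n_i+\big[\frp_\eps(Y^n_i)+a\lan \frp_\eps(Y^n)\ran_i\big]dt$. Since $\frp_\eps$ is bounded by $\eps^{-1}$ and Lipschitz with constant $\eps^{-2}$, the drift $b_\eps$ is Lipschitz in $y$ and Lipschitz in $m$ with respect to the $1$-Wasserstein distance (indeed even with respect to bounded-Lipschitz distance). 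The constant is of order $\eps^{-2}(1+|a|)$, and the noise is additive.

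\emph{Existence and uniqueness.} I would run a Picard / fixed-point argument on flows of measures. For a continuous deterministic function $\ell:\R_+\to\R$, let $Z^\ell$ solve $Z=\oo X_0+\oo W+\int_0^\cdot\frp_\eps(Z)\,ds+a\,\ell$; this has a pathwise unique strong solution by the Lipschitz property. Then define $\Phi(\ell)(t)=\E\int_0^t\frp_\eps(Z^\ell(s))\,ds$. A Gronwall estimate gives $\sup_{s\le t}|Z^\ell(s)-Z^{\ell'}(s)|\le |a|e^{\eps^{-2}t}\sup_{s\le t}|\ell-\ell'|$. Consequently
\[
|\Phi(\ell)(t)-\Phi(\ell')(t)|\le C_{\eps,T}\int_0^t\sup_{u\le s}|\ell(u)-\ell'(u)|\,ds .
\]
Iterating this Volterra-type bound (or using a weighted sup norm $e^{-Kt}$) yields a unique fixed point on each $[0,T]$, hence on $\R_+$. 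Pathwise uniqueness of $\oo Y$ follows, since any solution determines $\ell=\E\oo\La$, which must be the fixed point. Strong existence holds because $Z^\ell$ is adapted to the filtration of $(\oo X_0,\oo W)$. No moment assumption on $\mu_0$ is needed, since the drift depends on the law only through the bounded quantity $\E\frp_\eps$.

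\emph{Propagation of chaos.} I would use the synchronous coupling: let $\oo Y_i$ be the solution of \eqref{23} driven by $(X^n_{0,i},W^n_i)$, $i\in[n]$. These are i.i.d. if the initial conditions are i.i.d. In general they are only exchangeable with marginal $\mu_0$, and this is the main subtlety. Subtracting the two systems, the increment involves $a\big(\lan\frp_\eps(Y^n)\ran_i-\E\frp_\eps(\oo Y(s))\big)$. I would split this as
\[
a\lan\frp_\eps(Y^n)-\frp_\eps(\oo Y)\ran_i+a\big(\lan\frp_\eps(\oo Y)\ran_i-\E\frp_\eps(\oo Y)\big).
\]
The first term is bounded by $|a|\eps^{-2}\lan|Y^n-\oo Y|\ran_i$. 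Averaging over $i$ and applying Gronwall reduces the problem to showing that $\E\sup_{s\le T}\big|\lan \frp_\eps(\oo Y(s))\ran_i-\E\frp_\eps(\oo Y(s))\big|\to0$.

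When the $\oo Y_j$ are i.i.d., the $L^2$ law of large numbers gives a rate $O(n^{-1/2})$, uniformly in $s$ by a chaining or equicontinuity argument (the paths $s\mapsto\frp_\eps(\oo Y(s))$ are uniformly Hölder in expectation). For merely exchangeable initial data with $\mu^n_0\to\mu_0$ in probability, I would use that $\oo Y_j=F(X^n_{0,j},W^n_j)$ for a measurable, indeed continuous, map $F$ with the $W^n_j$ i.i.d. and independent of the initial data. Conditionally on the initial data, the LLN yields concentration around $\frac1n\sum_j\int \frp_\eps(F(x,\cdot))\,d\mathbb W\,\del_{X^n_{0,j}}$, and this converges to the $\mu_0$-average by the assumed convergence of $\mu^n_0$ together with continuity in $x$. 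This step is where I expect the main care to be needed.

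Once $\E\sup_{t\le T}|Y^n_i-\oo Y_i|\to0$, the convergence $(Y^n_i,W^n_i)_{i\in[k]}\To(\oo Y_i,\oo W_i)_{i\in[k]}$ follows. The limits are independent because $(X^n_{0,i})_{i\in[k]}$ is asymptotically i.i.d. $\mu_0$, which is equivalent to $\mu^n_0\to\mu_0$ under exchangeability.
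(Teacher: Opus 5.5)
Your proposal is correct, but it takes a genuinely different route from the paper, whose proof of this proposition is essentially a citation. The paper proceeds as follows:
\begin{itemize}
\item it takes strong existence and pathwise uniqueness for \eqref{23} from \cite[Theorem 1.1]{sznitman1991topics};
\item for the particle limit, it replaces $\lan\frp_\eps(Y^n)\ran_i$ by $\lan\frp_\eps(Y^n)\ran$ at the cost of an error of order $\eps^{-1}tn^{-1}$;
\item it then views $(Y^n,W^n)$ as $n$ weakly interacting particles in $\R^2$ with bounded continuous drift and invokes \cite[Theorem 4.1 and Section 5.2]{gar88}, the latter removing moment conditions on the initial data.
\end{itemize}

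You instead exploit the special structure that the interaction enters only through the scalar function $\ell=\E\oo\La$.
\begin{itemize}
\item The Volterra fixed point in $\ell$ gives existence and pathwise uniqueness directly, with no moment assumptions.
\item The synchronous coupling $\oo Y_i=F(X^n_{0,i},W^n_i)$ is combined with a conditional law of large numbers given $X^n_0$. The conditional variance is at most $\eps^{-2}/(n-1)$. The conditional mean is $\frac1n\sum_j g_s(X^n_{0,j})+O(1/n)$, where $g_s(x)=\E\frp_\eps(F_s(x,\oo W))$ is bounded and Lipschitz.
\item This handles exchangeable, non-i.i.d.\ initial data and the $1/(n-1)$ normalization in one stroke.
\end{itemize}

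Your route has three advantages. It is self-contained. It yields the stronger pathwise statement $\E\sup_{t\le T}|Y^n_i-\oo Y_i|\to0$ with an explicit Gronwall bound. It also makes visible that what is really used is the hypothesis $\mu^n_0\to\mu_0$ in probability from the introduction; membership $\nu^{(n)}\in\calP_\sym(S^{(n)};\mu_0)$ alone would not suffice. The paper's route is shorter but delegates checking the hypotheses to \cite{gar88}.

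One simplification: your uniform-in-$s$ chaining step is unnecessary. After taking expectations and using exchangeability, Gronwall only requires $\int_0^T\E\big|\lan\frp_\eps(\oo Y(s))\ran_1-\E\frp_\eps(\oo Y(s))\big|\,ds\to0$. This follows from the pointwise-in-$s$ convergence by bounded convergence.
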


The penalty method, which goes back to \cite{men83}, consists of showing that, for each $n$, the vanishing $\eps$ limit of a penalized diffusion, such as $Y^n$, is given by a reflected diffusion, such as $X^n$. However, here we will need a uniform-in-$n$ convergence.

\begin{proposition}\label{prop3}
There exists $n_0=n_0(a)$ such that, as $\eps\to0$, $(Y^n_1,\La^n_1)\to (X^n_1,L^n_1)$ in $C^{(2)}$ in probability, uniformly in $n\ge n_0$.
\end{proposition}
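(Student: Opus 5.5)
The plan is to reduce the $n$-dimensional comparison to one-dimensional Skorokhod maps plus a scalar fixed-point problem for the empirical mean of the boundary terms. The key point is that the mean-field structure lets us isolate each coordinate's own boundary term. Write $Z^n_i=X^n_{0,i}+W^n_i$. Since $\lan v\ran_i=\frac{n}{n-1}\lan v\ran-\frac{1}{n-1}v_i$, equation \eqref{51} becomes
\[
X^n_i=Z^n_i+c_nL^n_i+b_n\la^n,\qquad c_n=1-\tfrac{a}{n-1},\quad b_n=\tfrac{an}{n-1}.
\]
Choose $n_0(a)$ so that $c_n\ge c_*>0$ for $n\ge n_0$; this is where $n_0$ enters, and it is only needed when $a>0$. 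Denote by $\Gamma_1(f)(t)=\sup_{s\le t}f(s)^-$ the one-dimensional Skorokhod regulator. Then $L^n_i=c_n^{-1}\Gamma_1(Z^n_i+b_n\la^n)$, so $\la^n$ is a fixed point of the scalar map
\[
F_n(\ell)=c_n^{-1}\lan\Gamma_1(Z^n+b_n\ell)\ran.
\]
In the same way, $Y^n_i=Z^n_i+c_n\La^n_i+b_n\lan\La^n\ran$. Hence $\La^n_i$ is the penalized regulator, with the rescaled penalty $c_n\frp_\eps$, of the input $Z^n_i+b_n\lan\La^n\ran$.

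The steps I would carry out are as follows.

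\emph{Step 1: a one-dimensional penalty estimate.} Show that for a continuous input $f$ with $f(0)\ge0$, the penalized regulator $\La^\eps(f)$ satisfies
\[
\|\La^\eps(f)-\Gamma_1(f)\|_T\le C\bigl(\eps+w_T(f,\delta)\bigr)+\eps\,\delta^{-1}\text{-type terms},
\]
uniformly over $c\in[c_*,c^*]$. The argument is the classical one of \cite{men83}: the penalized path cannot go below $-\eps$ by more than the oscillation of $f$ over a time of order $\eps$, and the regulator is $1$-Lipschitz in sup norm.

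\emph{Step 2: tightness of the inputs, uniformly in $n$.} For the moduli of continuity I would use exchangeability to bound $w_T(Z^n_1,\delta)$ in probability uniformly in $n$. I would bound $w_T(\lan\La^n\ran,\delta)$ and $w_T(\la^n,\delta)$ through the relation
\[
(1+a)\lan\La^n\ran=\lan Y^n\ran-\lan Z^n\ran,
\]
together with a priori bounds on the negative parts of the $Y^n_i$, which keeps everything at the level of averages of i.i.d.-type quantities.

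\emph{Step 3: stability of the scalar fixed point.} Show that $\ell^\eps:=\lan\La^n\ran$ is an approximate fixed point, $\|\ell^\eps-F_n(\ell^\eps)\|_T\le e_n(\eps)$, where by Steps 1--2 the error $e_n(\eps)\to0$ in probability uniformly in $n\ge n_0$. Then deduce $\|\ell^\eps-\la^n\|_T\to0$. Once this holds, the $1$-Lipschitz property of $\Gamma_1$ and Step 1 give $\|\La^n_1-L^n_1\|_T\to0$. The identity $Y^n_1-X^n_1=c_n(\La^n_1-L^n_1)+b_n(\ell^\eps-\la^n)$ then finishes the proof.

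I expect Step 3 to be the main obstacle. The Lipschitz constant of $F_n$ is $|b_n|/c_n\approx|a|$, so a plain contraction argument works only for $|a|<1$. For $a\in(-1,0]$, $F_n$ is monotone nondecreasing in $\ell$ with constant $<1$, and contraction suffices. For $a\ge1$ I would instead exploit that $F_n$ is order-reversing in $\ell$, because $\Gamma_1$ is nonincreasing in its input, and compare against the conserved relation
\[
(1+a)\la^n=\lan X^n\ran-\lan Z^n\ran.
\]
Concretely, I would look at the first time $\tau$ at which $\ell^\eps-\la^n$ exceeds $\eta$, and use the sum identity to show that the coordinates pushing there are those with $X^n_i\approx0$. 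The relation $1+a>0$ should then yield a contradiction, in the spirit of the summation step in Lemma \ref{lem01}. If that first-crossing argument resists, the fallback is to redo the Harrison--Reiman contraction of Theorem \ref{th-hr} in a weighted norm on the pair (own term, mean term), using $c_n>0$ and $1+a>0$ as the two effective spectral conditions.
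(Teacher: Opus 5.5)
\textbf{There is a genuine gap: your Step 3 is not established for $a\ge1$.}

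Your reduction is correct as far as it goes. Since $c_n+b_n=1+a$, you do get $c_nL^n_i=\Gamma_1(Z^n_i+b_n\la^n)$. The elementary comparison $|c_n\La^n_i-\Gamma_1(Z^n_i+b_n\lan\La^n\ran)|\le\|Y^{n,-}_i\|_T$ makes $\ell^\eps=\lan\La^n\ran$ an approximate fixed point of $F_n$, with error at most $c_n^{-1}\lan\|Y^{n,-}\|_T\ran$. Its mean is small uniformly in $n$ by an estimate like Lemma \ref{lem05}. For $a\in(-1,1)$ the contraction step then closes the argument, even in sup norm, so on that range you have a legitimate alternative to the paper's route.

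But the proposition covers all $a>-1$, and for $a\ge1$ neither of your suggested routes works as stated.
\begin{itemize}
\item \emph{First-crossing argument.} At the first time $\tau$ with $|\ell^\eps-\la^n|(\tau)=\eta$, the sup-norm structure of $\Gamma_1$ only gives $|F_n(\ell^\eps)(\tau)-F_n(\la^n)(\tau)|\le(b_n/c_n)\eta$. Hence $(1-b_n/c_n)\eta\le e_n(\eps)$, which is vacuous when $b_n/c_n\ge1$. The relation $(1+a)(\ell^\eps-\la^n)\approx\lan Y^n-X^n\ran$ does not supply the missing sign. A coordinate off the boundary at $\tau$ carries a regulator value frozen at its last boundary time, where the deviation may have had either sign.
\item \emph{Harrison--Reiman fallback.} This fails structurally. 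The spectral radius of $|R^{(n)}-\mathbf{I}_n|$ is $|a|$, so for $a\ge1$ you are exactly outside $\calR^{(n)}$. The scalar map genuinely expands in sup norm: with identical inputs, on a stretch where the running supremum is attained at the current time, $F_n(\ell)(t)=-c_n^{-1}(Z(t)+b_n\ell(t))$, which has slope $-b_n/c_n$. So no reweighting of sup norms, on $L$ or on the pair (own term, mean term), yields a contraction.
\end{itemize}

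\textbf{The missing idea} is to use $1+a>0$ through an $L^2$ energy estimate instead of a Lipschitz bound; this is Lemma \ref{lem07} in the paper.
\begin{itemize}
\item Expand $(X^n_i-Y^n_i)^2$. The own-boundary part contributes at most $2\al_n\int_0^tY_i^-dL_i$.
\item After averaging over $i$, the interaction part involves $d\lan L\ran-\lan\frp_\eps(Y)\ran ds=d\lan X-Y\ran/(1+a)$. Since $\lan X-Y\ran$ has zero quadratic variation, it integrates to $\frac{a\beta_n}{1+a}\lan X-Y\ran^2(t)$.
\item By Jensen this is at most $\frac{a\beta_n}{1+a}\lan (X-Y)^2\ran(t)$. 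It is absorbed on the left because $a\beta_n/(1+a)<1$ for large $n$ whenever $a>-1$.
\end{itemize}
Equivalently, $R^{(n)}$ is symmetric positive definite once $n-1>a$. Its quadratic form, evaluated at $L^n-\La^n$, is nondecreasing only through the penalty error $2\sum_i\int_0^t Y_i^-dL_i$.

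This yields $\E[(X^n_1-Y^n_1)^2]\to0$ uniformly in $n\ge n_0$ for every $a>-1$. Equicontinuity (Lemma \ref{lem06}) and the identity $\lan X-Y\ran=(1+a)\lan L-\La\ran$ then finish the proof. The same quadratic-form argument, applied to $\tilde L_i=c_n^{-1}\Gamma_1(Z^n_i+b_n\ell^\eps)$ against $L^n_i$, is what would repair your Step 3.
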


A similar result is stated now about $\oo X$ and $\oo Y$, of \eqref{MVSDE} and \eqref{23}. These processes are driven by the same $(\oo X_0,\oo W)$, which again can be done thanks to the existence of strong solutions.

\begin{proposition}\label{prop4}
As $\eps\to0$, $(\oo Y^\eps,\oo\La^\eps)\to(\oo X,\oo L)$ in $C^{(2)}$ in probability.
\end{proposition}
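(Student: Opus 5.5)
The plan is a compactness argument on the deterministic function $\la^\eps(t):=\E\oo\La^\eps(t)$. The limit is identified through the one-dimensional Skorokhod map and the uniqueness in Proposition \ref{prop1}. Write $\Gamma=\Gamma^{(1)}$ for the Skorokhod map on the half-line, so that $(\Gamma_1(z),\Gamma_2(z))$ with $\Gamma_2(z)(t)=\sup_{s\le t}(z(s))^-$. Recall that $\Gamma$ is Lipschitz: $\|\Gamma_i(z)-\Gamma_i(z')\|_t\le 2\|z-z'\|_t$. Setting $z^\eps:=\oo X_0+\oo W+a\la^\eps$, the penalized equation \eqref{23} reads $\oo Y^\eps=z^\eps+\oo\La^\eps$ with $\oo\La^\eps=\int_0^\cdot\frp_\eps(\oo Y^\eps(s))ds$. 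This is a pathwise penalized problem driven by the continuous path $z^\eps$.

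\emph{Step 1: a deterministic penalty lemma.} For a continuous $z$ with $z(0)\ge0$, let $y=z+\ell$, where $\ell=\int_0^\cdot\frp_\eps(y)$. I will prove the following two bounds:
\[
\ell(t)-\ell(s)\le \sup_{s\le u\le t}(z(s)-z(u))^+ +\eps,
\qquad
\|\ell-\Gamma_2(z)\|_T\le C\bigl(w_T(z,\del)+\eps+\eps\del^{-1}\cdot 0+\eta(\eps,\del)\bigr),
\]
where $\eta\to0$ as $\eps\to0$ for fixed $\del$. Concretely, I aim for $\|\ell-\Gamma_2(z)\|_T\le \phi_T(\eps, w_T(z,\cdot))$ with $\phi_T\to0$ as $\eps\to0$, uniformly over families of $z$ with a common modulus of continuity.

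The argument is elementary. First, $y\ge-\eps$ fails only transiently: whenever $y<-\eps$, the push rate is $\eps^{-1}$, so $y$ returns above $-\eps$ within a time of order $\eps\,\cdot$(size of the dip). Second, $\ell$ increases only while $y<0$. Comparing $\ell$ with the minimal pusher $\Gamma_2(z)$ gives the lower bound $\ell\ge\Gamma_2(z)-\eps-(\text{osc of } z \text{ over time } O(\eps))$ and the upper bound $\ell\le\Gamma_2(z)+\eps$. This step is the main technical obstacle, since the estimate must depend on $z$ only through its modulus of continuity.

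\emph{Step 2: uniform bounds on $\la^\eps$.} Take expectations in the increment bound of Step 1 with $z=z^\eps$. If $a\ge0$, the term $a\la^\eps$ is nondecreasing and does not increase the negative oscillation, so
\[
\la^\eps(t)-\la^\eps(s)\le \E\, w_t(\oo W,t-s)+\eps .
\]
If $a\in(-1,0)$, the same bound holds with an extra $|a|(\la^\eps(t)-\la^\eps(s))$ on the right, and this term is absorbed because $|a|<1$. This is exactly where $a>-1$ enters. In both cases $\{\la^\eps\}$ is uniformly bounded and equicontinuous on compacts. Hence, by Arzelà–Ascoli, every sequence $\eps_m\to0$ has a subsequence along which $\la^{\eps_m}\to\la^*$ in $C^{(1)}$.

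\emph{Step 3: identify the limit.} Along the subsequence, $z^{\eps_m}\to z^*:=\oo X_0+\oo W+a\la^*$ uniformly on compacts, pathwise. The paths $z^{\eps_m}$ have a modulus of continuity controlled by $w(\oo W,\cdot)$ together with the common modulus of the $\la^{\eps}$. Step 1 and the Lipschitz property of $\Gamma$ then give $\oo\La^{\eps_m}\to\Gamma_2(z^*)$ and $\oo Y^{\eps_m}\to\Gamma_1(z^*)$ uniformly on compacts a.s.

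The bound $\oo\La^\eps(t)\le \sup_{s\le t}(\oo W(s))^- +|a|\sup_\eps\|\la^\eps\|_t+\eps$ provides a dominating variable. By dominated convergence, $\la^*(t)=\E\,\Gamma_2(z^*)(t)$. Thus $(\Gamma_1(z^*),\Gamma_2(z^*),\la^*)$ solves \eqref{MVSDE} with the same $(\oo X_0,\oo W)$. By the pathwise uniqueness of Proposition \ref{prop1}, this solution coincides with $(\oo X,\oo L,\la)$, and in particular $\la^*=\la$.

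Since every subsequence has a further subsequence with the same limit, the whole family converges. Therefore $(\oo Y^\eps,\oo\La^\eps)\to(\oo X,\oo L)$ in $C^{(2)}$ a.s., hence in probability.
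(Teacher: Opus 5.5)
Your overall route (compactness of $\la^\eps$ via Arzel\`a--Ascoli, pathwise identification through the Skorokhod map, uniqueness from Proposition~\ref{prop1}) is viable. As written, however, Step~1 contains a false inequality on which Step~2 rests.

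\textbf{The false increment bound.} The bound $\ell(t)-\ell(s)\le\sup_{s\le u\le t}(z(s)-z(u))^++\eps$ fails. Take $z(u)=-2\eps^{-1}u$ for $u\le h$ and $z(u)=z(h)$ for $u\ge h$, with $h>\eps^2$. On $[0,h]$ one has $y'\le-2\eps^{-1}+\eps^{-1}$, so $y(h)\le-h/\eps$. Afterwards $z$ is constant and $y\uparrow0$, so $\ell(t)-\ell(h)=y(t)-y(h)\to-y(h)\ge h/\eps>\eps$. Your right-hand side with $s=h$ equals $\eps$. The penalized path can sit far below $-\eps$ at time $s$, and this debt $y(s)^-$ is repaid after $s$; Brownian drivers produce this with positive probability for each fixed $\eps$. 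What does hold, by the comparison argument you sketch, is $\ell\le\Gamma_2(z)$ with no error, $\ell\ge\Gamma_2(z)-\|y^-\|_t$, and
\[
\ell(t)-\ell(s)\le\sup_{s\le u\le t}(z(s)-z(u))^+ + y(s)^-.
\]

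\textbf{Consequence for Step 2.} Step~2 therefore needs $\sup_{s\le T}\E[\oo Y^\eps(s)^-]$ to be small uniformly in small $\eps$, and your Step~1 cannot supply this. It controls $y^-$ through the modulus of $z^\eps=\oo X_0+\oo W+a\la^\eps$. For $a<0$ that modulus involves the modulus of $\la^\eps$, which is exactly what Step~2 is trying to establish, so the argument is circular. For $a\ge0$ you could escape by using only downward oscillations of $z^\eps$, since $a\la^\eps$ is nondecreasing. Boundedness of $\la^\eps$ is unaffected in all cases, because $y(0)^-=0$.

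\textbf{The missing estimate.} What is missing is the nonlinear analogue of Lemma~\ref{lem05}. Since $0\le\frac{d}{dt}\la^\eps=\E\frp_\eps(\oo Y^\eps)\le\eps^{-1}$, on any interval where $\oo Y^\eps\le-\eps$ the process $\oo Y^\eps-\oo W$ increases at rate at least $(1+(a\w0))\eps^{-1}$. The tail bounds \eqref{b20} for $\oo W$ then give $\E\|\oo Y^{\eps,-}\|_T\to0$. This is a second, essential use of $a>-1$, beyond the absorption you describe. With this estimate supplied, your Steps~2 and~3 go through, and Step~3 even yields a.s.\ convergence.

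\textbf{Comparison with the paper.} The paper relies on the same control of $\oo Y^{\eps,-}$ (it states $\E[(\|\oo Y^-\|_t)^2]\to0$ and skips the proof, saying it follows the lines of Lemma~\ref{lem05}). It pairs this with the identity $\E(\oo X-\oo Y)=(1+a)(\la-\la^\eps)$ and Jensen to obtain a bound of the form $\E[(\oo X(t)-\oo Y(t))^2]\le c\,\E\int_0^t\oo Y^-d\oo L$, plus a modulus estimate. So the paper never needs compactness of $\{\la^\eps\}$ or the uniqueness in Proposition~\ref{prop1}.
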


\noi{\bf Proof of Theorem \ref{th1}.}
Statement (i) is already established in Proposition \ref{prop1}. For statement (ii), fix $k$ and denote
\[
\bX^n_{[k]}=(X^n_i,L^n_i,W^n_i)_{i\in[k]},
\qquad
\oo\bX_{[k]}=(\oo X_i,\oo L_i,\oo W_i)_{i\in[k]},
\]
\[
\bY^{\eps,n}_{[k]}=(Y^{\eps,n}_i,\La^{\eps,n}_i,W^n_i)_{i\in[k]},
\qquad
\oo\bY^\eps_{[k]}=(\oo Y^\eps_i,\oo\La^\eps_i,\oo W_i)_{i\in[k]},
\]
where $(\oo Y^\eps_i,\oo\La^\eps_i,\oo W_i)_{i\in[k]}$ (respectively, $(\oo X_i,\oo L_i,\oo W_i)_{i\in[k]}$) are $k$ independent copies of $(\oo Y^\eps,\oo\La^\eps,\oo W)$ (respectively $(\oo X,\oo L,\oo W)$). We emphasize that the systems are driven by the same $(\oo X_{0,i},\oo W_i)$.
We now show that $\bX^n_{[k]} \To \oo\bX_{[k]}$ in $C^{(3k)}$. It suffices to show that, given $T$, $\E \psi(\bX^n_{[k]})\to\E \psi(\oo\bX_{[k]})$ for all $\psi\in C_b(C([0,T],\R^{3k}),\R)$. For such $\psi$, write
\[
|\E \psi(\bX^n_{[k]})-\E \psi(\oo\bX_{[k]})|
\le |\E \psi(\bX^n_{[k]})-\E \psi(\bY^{\eps,n}_{[k]})|+|\E \psi(\bY^{\eps,n}_{[k]})-\E \psi(\oo\bY^\eps_{[k]})|+|\E \psi(\oo\bY^\eps_{[k]})-\E \psi(\oo\bX_{[k]})|.
\]
Due to exchangeability, Proposition \ref{prop3} implies the convergence in probability, as $\eps\to0$, of $\bY^{\eps,n}_{[k]}\to\bX^n_{[k]}$. Likewise, Proposition \ref{prop4} implies the convergence in probability, as $\eps\to0$, of $\oo\bY^{\eps}_{[k]}\to\oo\bX_{[k]}$.
Hence, given $\eta>0$, one can find $\eps$ such that the first and third terms on the right of the above display are less than $\eta/3$ each, for all $n\ge n_0(a)$. For that $\eps$, by Proposition \ref{prop2}, the second term is less than $\eta/3$ for all large $n$. We obtain that $|\E \psi(\bX^n_{[k]})-\E \psi(\oo\bX_{[k]})|<\eta$ for all large $n$, and the claim is proved.

Next we show that $\mu^n\to\mu$. Having established propagation of chaos, this statement follows from the general theory. In particular, fix $T$ and let $m$ be the law induced by $(\oo X,\oo L)$ on $C([0,T],\R^2)$. Let $Z^n_i=\{(X^n_i(t),L^n_i(t)),t\in[0,T]\}$. Then
\[
m^n:=\frac{1}{n}\sum_{i\in[n]}\del_{Z^n_i}\to m
\]
in $\calP(C([0,T],\R^2))$ in probability; see \cite[Definition 2.1 and Proposition 2.2]{sznitman1991topics}. Metrizing $\calP(C([0,T],\R^2))$ and $\calP(\R^2)$ by the Levy--Prohorov metric $d_{\rm LP}$ and $\oo d_{\rm LP}$, respectively, it follows from the definition of this metric that, for $t\in[0,T]$,
\[
\oo d_{\rm LP}(\mu^n_t,\mu_t)=\oo d_{\rm LP}(m^n\circ x(t)^{-1},m\circ x(t)^{-1})\le d_{\rm LP}(m^n,m),
\]
with $x=\{x(t)\}$ the canonical process on $C([0,T],\R^2)$.
Taking supremum over $t\in[0,T]$ proves $\mu^n\to\mu$ in $C(\R_+,\calP(\R^2))$ in probability.

Finally we prove the assertion regarding convergence of $\la^n$. If $\la$ is the last component of the solution of \eqref{MVSDE} then by definition $\la(t)=\E\oo L(t)$. Also by definition, the law of $\oo L(t)$ is the second marginal of $\mu_t$, $t\in\R_+$. Hence $\la(t)=\int_{\R_+^2}y\mu_t(dx,dy)$.

To show that $\la^n\to\la$, note that $(X^n_1,X^n_{0,1},W^n_1,L^n_1)\To(\oo X,\oo X_0,\oo W,\oo L)$, as we have shown. Since these two tuples constitute all but the last term in \eqref{51} and respectively \eqref{MVSDE}, it follows that the same statement must hold for the respective last terms as well: $\lan L^n\ran_1\to\la$ in $C^{(1)}$ in probability. Since $L^n_1$ are tight, it follows that $\lan L^n\ran\to\la$ as well.
\qed

\noi{\bf About notation.}
In what follows, for the most part, the dependence of processes on $n$ will be suppressed from the notation in the proofs of lemmas and propositions (for example, we will write $W$ for $W^n$).

\subsection{Proof of Proposition \ref{prop1} (existence and uniqueness for \eqref{MVSDE})}
\label{sec31}

Here we prove existence and uniqueness for the nonlinear RBM. The existence statement is proved based on tightness of the processes $(X^n_1,L^n_1)$, a property used again later in the convergence proof.
It is worth mentioning that, in the case where the initial conditions $X^n_{0,i}$ are i.i.d., the results of this subsection alone give the convergence $(X^n_1,L^n_1)\To(\oo X,\oo L)$.

An elementary lemma about the Skorohod problem is as follows.
\begin{lemma}\label{lem02}
Let $w\in C_{\R_+}(\R_+,\R)$, $x\in C(\R_+,\R_+)$ and $\ell\in C_+(\R_+,\R_+)$ be such that $x=w+\ell$ and $\int_{[0,\iy)}xd\ell=0$. Then
\[
\ell(t)=\sup_{s\le t}(w(s)^-), \qquad t\ge0.
\]
\end{lemma}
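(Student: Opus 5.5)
This is the one-dimensional Skorohod lemma, and the plan is to prove the formula by a two-sided inequality. Write $m(t):=\sup_{s\le t}(w(s)^-)$. Note that $m$ is nondecreasing, continuous (since $w$ is continuous), and $m(0)=w(0)^-=0$ because $w(0)\in\R_+$. I will show $\ell\ge m$ and $\ell\le m$ separately.

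\emph{Lower bound.} Since $x\ge0$ and $x=w+\ell$, we have $\ell(s)\ge -w(s)$ for all $s$. Also $\ell(s)\ge0$, so $\ell(s)\ge w(s)^-$. Because $\ell$ is nondecreasing, for $s\le t$ we get $\ell(t)\ge\ell(s)\ge w(s)^-$. Taking the supremum over $s\le t$ gives $\ell(t)\ge m(t)$.

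\emph{Upper bound.} This is the step where the complementarity condition $\int x\,d\ell=0$ enters, and it is the only nontrivial point. Suppose $\ell(t)>m(t)$ for some $t$. Let $\sigma:=\sup\{s\le t:\ell(s)\le m(s)\}$. This set contains $s=0$, since $\ell(0)=0=m(0)$. By continuity of $\ell$ and $m$, we have $\ell(\sigma)\le m(\sigma)$, hence $\sigma<t$, and $\ell(s)>m(s)$ for all $s\in(\sigma,t]$. On this interval,
\[
x(s)=w(s)+\ell(s)>w(s)+m(s)\ge w(s)+w(s)^-\ge0 ,
\]
so $x>0$ on $(\sigma,t]$. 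The condition $\int x\,d\ell=0$ then forces $\ell$ to be constant on $[\sigma,t]$; more precisely, $d\ell$ puts no mass on $(\sigma,t]$, and by continuity $\ell(t)=\ell(\sigma)$. It follows that
\[
\ell(t)=\ell(\sigma)\le m(\sigma)\le m(t),
\]
where the last step uses that $m$ is nondecreasing. This contradicts $\ell(t)>m(t)$.

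Combining the two bounds gives $\ell(t)=m(t)=\sup_{s\le t}(w(s)^-)$ for all $t\ge0$.
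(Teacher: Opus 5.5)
Your proof is correct and complete. The lower bound uses only $x\ge0$, $\ell\ge0$ and monotonicity of $\ell$. In the upper bound, the set $\{s\le t:\ell(s)\le m(s)\}$ is closed and contains $0$, so $\ell(\sigma)\le m(\sigma)$; and $x>0$ on $(\sigma,t]$ together with $\int x\,d\ell=0$ gives $\ell(t)=\ell(\sigma)$.

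The paper offers no argument of its own, only the citation \cite[Section 8]{chu-wil}. The standard textbook treatment of the Skorohod problem proceeds by existence plus uniqueness. For existence, one verifies that $m(t)=\sup_{s\le t}w(s)^-$ and $w+m$ solve the problem; this requires checking that $m$ increases only on $\{w+m=0\}$. For uniqueness, two solutions $(x,\ell)$, $(\tilde x,\tilde\ell)$ driven by the same $w$ coincide, because $x-\tilde x=\ell-\tilde\ell$ is continuous and of bounded variation, so
\[
(x-\tilde x)^2(t)=2\int_0^t(x-\tilde x)\,d(\ell-\tilde\ell)=-2\int_0^t\tilde x\,d\ell-2\int_0^t x\,d\tilde\ell\le0 .
\]
This is the same device the paper uses for uniqueness in Proposition \ref{prop1} and in Lemma \ref{lem07}.

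Your minimality-plus-last-exit argument instead compares an arbitrary solution directly with $m$. It therefore yields exactly the representation asserted in the lemma (and hence uniqueness) without having to verify the complementarity property of $m$ itself. The existence-plus-uniqueness route pays for that verification, but in return also gives existence of a solution for every $w$ with $w(0)\ge0$, which the lemma as stated does not need.
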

\proof See \cite[Section 8]{chu-wil}.
\qed

\begin{lemma}\label{lem03}
For $a>-1$, $\{X^n_1,n\ge2\}$ and $\{L^n_1,n\ge2\}$ are tight in $C(\R_+,\R_+)$.
Moreover, $\sup_n\E[L^n_1(t)^2]<\iy$, for every $t$.

\end{lemma}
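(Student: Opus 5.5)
Rewrite \eqref{51} for particle $i$ as a one-dimensional Skorohod problem: $X^n_i=w_i+L^n_i$ with
\[
w_i(t):=X^n_{0,i}+W^n_i(t)+a\lan L^n(t)\ran_i .
\]
Since $X^n_{0,i}\ge0$ we have $w_i(0)\ge0$, so Lemma~\ref{lem02} gives
\[
L^n_i(t)=\sup_{s\le t}w_i(s)^- .
\]
The plan is first to bound the averaged local time $\lan L^n(t)\ran$ in terms of the Brownian inputs. We then feed that bound back into the individual Skorohod representation, both for the second-moment estimate and for tightness via moduli of continuity.

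\emph{Case $a\ge0$.} Since $L^n_j\ge0$, we have $w_i\ge X^n_{0,i}+W^n_i$, hence $L^n_i(t)\le\sup_{s\le t}(W^n_i(s))^-$. This is bounded uniformly in $n$ in $L^2$.

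\emph{Case $a\in(-1,0)$.} Here we have
\[
L^n_i(t)\le \sup_{s\le t}(W^n_i(s))^- + |a|\,\lan L^n(t)\ran_i ,
\]
using monotonicity of $L^n_j$. Average over $i$, using $\frac1n\sum_i\lan v\ran_i=\lan v\ran$, and set $M^n(t)=\frac1n\sum_i\sup_{s\le t}(W^n_i(s))^-$. This gives
\[
(1-|a|)\lan L^n(t)\ran\le M^n(t).
\]
Since $\lan v\ran_i\le \frac{n}{n-1}\lan v\ran$ for $v\ge0$, we get $L^n_1(t)\le \sup_{s\le t}(W^n_1(s))^-+2|a|(1-|a|)^{-1}M^n(t)$. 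Because $\E M^n(t)^2\le \E[\sup_{s\le t}(W_1(s))^-]^2$ by Jensen, the bound $\sup_n\E L^n_1(t)^2<\iy$ follows. Only $a>-1$ is used here, and this step is the crux: for $a\le-1$ the averaged inequality gives no control.

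For tightness, use the Lipschitz property of the Skorohod map. For $s\le t$,
\[
|L^n_1(t)-L^n_1(s)|\le \sup_{u\in[s,t]}|w_1(u)-w_1(s)| ,
\]
so
\[
w_T(L^n_1,\del)\le w_T(W^n_1,\del)+|a|\,w_T(\lan L^n\ran_1,\del).
\]
The main obstacle is controlling the modulus of the average $\lan L^n\ran_1$ uniformly in $n$. Each $L^n_j$ satisfies the same Skorohod inequality, so averaging gives
\[
w_T(\lan L^n\ran,\del)\le \frac1n\sum_j w_T(W^n_j,\del)+|a|\,\frac{n}{n-1}w_T(\lan L^n\ran,\del)+O(1/n)\cdot(\text{sup terms}).
\]
A cleaner route is to note that $\lan L^n\ran_i$ and $\lan L^n\ran$ differ by $O(1/n)\max$-type terms, which are controlled in $L^1$ by the moment bound above. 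When $|a|<1$, which is the case for $a\in(-1,0)$, absorbing the middle term bounds $w_T(\lan L^n\ran,\del)$ by $(1-|a|\frac{n}{n-1})^{-1}$ times the average Brownian modulus. The prefactor is finite for $n$ large. The average Brownian modulus has expectation $\E w_T(W_1,\del)\to0$ as $\del\to0$, uniformly in $n$. For $a\ge0$ and $a\ge1$ the absorption fails, so I would instead use monotonicity. From $w_i\ge X^n_{0,i}+W^n_i+a\lan L^n(s)\ran_i$ on $[s,t]$, one gets $L^n_i(t)-L^n_i(s)\le \sup_{u\in[s,t]}(W^n_i(s)-W^n_i(u))^+$. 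So the increments of $L^n_i$, and hence of their average, are dominated by Brownian moduli with no absorption needed.

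In both cases $\E w_T(L^n_1,\del)\to0$ uniformly in $n$. Together with $L^n_1(0)=0$, this gives tightness of $L^n_1$. Tightness of $X^n_1=X^n_{0,1}+W^n_1+L^n_1+a\lan L^n\ran_1$ follows from the modulus bound on $\lan L^n\ran_1$ just obtained, plus tightness of $X^n_{0,1}\sim\mu_0$.
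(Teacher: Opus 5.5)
Your proof is correct and follows essentially the same route as the paper. Both use the Skorohod representation $L_i(t)=\sup_{s\le t}w_i(s)^-$ and split into two cases. For $a\ge0$, $L_i$ and its increments are dominated by Brownian quantities. For $a\in(-1,0)$, averaging over $i$ and absorbing $|a|<1$ controls $\lan L\ran$, which is then fed back through $\lan v\ran_i\le2\lan v\ran$.

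The only difference is in the modulus estimate for $a<0$. The paper averages the moduli themselves, using $w_T(\lan L\ran_i,\del)\le\lan w_T(L,\del)\ran_i$ to get $\lan w_T(L,\del)\ran\le(1-|a|)^{-1}\lan w_T(W,\del)\ran$ (or, via exchangeability, $\E w_T(L_1,\del)\le(1-|a|)^{-1}\E w_T(W_1,\del)$). This avoids the $O(1/n)$ correction terms you incur by working with $w_T(\lan L\ran,\del)$.
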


\proof
Using Lemma \ref{lem02} in \eqref{51},
\begin{equation}\label{06}
L_i(t)=\sup_{s\le t}[(X_{0,i}+W_i(s)+a\lan L(s)\ran_i)^-].
\end{equation}
Consider first $a\ge0$. Then
\begin{equation}\label{r1}
L_i(t)\le\sup_{s\le t}W_i(s)^-\le\|W_i\|_t.
\end{equation}
Moreover, for $t_1<t_2$, $L_i(t_2)-L_i(t_1)\le |W_i(t_2)-W_i(t_1)|$ hence
\begin{equation}\label{r02}
w_T(L_i,\del)\le w_T(W_i,\del).
\end{equation}
These two estimates show that, for fixed $i$, $L_i$, as a sequence indexed by $n$, are tight in $C(\R_+,\R_+)$. In view of this, to show now that, for fixed $i$, $X_i$ are tight, it suffices to show that $\lan L\ran_i$ are. But
\[
\E\lan L(t)\ran_i=\E L_1(t)\le \E\|W_1\|_t
\]
\[
\E w_T(\lan L\ran_i,\del)\le\E w_T(L_1,\del)\le \E w_T(W_1,\del).
\]
Thus, given $\eps$,
\[
\PP(w_T(\lan L\ran_i,\del)>\eps)
\le \eps^{-1}\E w_T(W_1,\del)\le c\eps^{-1}(\del\log(T/\del))^{1/2},
\]
for some constant $c$ independent of $n$. This gives tightness of $\lan L\ran_i$, and in turn, of $X_1$.

Next consider $-1<a<0$. Again by \eqref{06},
\[
L_i(t)\le \|W_i\|_t+|a|\lan L(t)\ran_i
\]
and averaging over $i$ gives
\begin{equation}
\label{r2}
\lan L(t)\ran\le c_a\lan \|W\|_t\ran,
\end{equation}
where $c_a=(1-|a|)^{-1}$,
and so $\E L_1(t)=\E\lan L\ran(t)\le c_a\E\|W_1\|_t$.
Similarly, by \eqref{06}
\begin{equation}
\label{r3}
w_T(L_i,\del)\le w_T(W_i,\del)+|a|w_T(\lan L\ran_i,\del)
\end{equation}
and, because $w_T(\lan L\ran_i,\del)\le \lan w_T(L,\del)\ran_i$, we have
\[
\E w_T(L_1,\del)\le \E w_T(W_1,\del)+|a|\E w_T(L_1,\del)
\]
and once again the tightness of $L_1$ follows.
The tightness of $\lan L\ran_i$ and consequently of $X_i$ now follows as in the previous case.

The second moment estimate follows from \eqref{r1} in the case $a\ge0$. In the case $-1<a<0$, we have using \eqref{r2},
\[
\lan L(t)\ran_i \le 2\lan L(t)\ran\le 2c_a\lan \|W\|_t\ran.
\]
Using this in \eqref{06},
\begin{equation}\label{x1}
L_i(t)\le\|W_i\|_t+|a|\lan L(t)\ran_i\le\|W_i\|_t+2|a|c_a\lan\|W\|_t\ran,
\end{equation}
and the claim follows.
For later use let us record the fact that, similarly to the argument leading to \eqref{x1}, one gets from \eqref{r3}
\begin{equation}\label{x2}
w_t(L_i,\del)\le w_t(W_i,\del)+2|a|c_a\lan w_t(W,\del)\ran.
\end{equation}
\qed

\noi {\bf Proof of Proposition \ref{prop1}.}
First we show existence. To this end, consider a special case of our setting, in which $X_{0,i}$ are i.i.d.\ across $i$ for each $n$ (and distributed $\mu_0$). This gives independence of $L_i$ for each $n$.
Write \eqref{51} as
\begin{equation}\label{46}
X_i(t)=X_{0,i}+W_i(t)+L_i(t)+a\E[L_i(t)]+a\lan L(t)-\E[L(t)]\ran_i.
\end{equation}
Consider the sequence $(X_1,L_1,W_1)$, $n\ge2$, proved in Lemma \ref{lem03} to be tight, and let $(\oo X,\oo L,\oo W)$ be a subsequential weak limit.
Using independence and the second moment bound of Lemma \ref{lem03},
\[
{\rm Var}\lan L(t)\ran_i=(n-1)^{-1}{\rm Var}(L_1(t))\le c(n-1)^{-1},
\]
showing that the last term in \eqref{46} goes to zero in probability for each $t$.
Let $\la(t)=\E\oo L(t)$. Then by uniform integrability of $L_i$, $\E[L_i]\to\la$ for each $t$.
By tightness of $(X_i,X_{0,i}+W_i,L_i)$, $t\mapsto\E[L_i(t)]$ must converge to a continuous function. Hence $\E[L_i]\to\la$ in $C^{(1)}$. As a result, 
\[
\oo X(t)=\oo X_0+\oo W(t)+\oo L(t)+a\la(t).
\]
The boundary property $\int\oo Xd\oo L=0$ follows from $\int X^n_1dL^n_1=0$. This shows that $(\oo X,\oo L,\oo W,\la)$ is a weak solution of \eqref{MVSDE}. To produce a strong solution, given a $(b,\sig)$-BM $\tilde W$ and an initial condition $\tilde X_0$, use the function $\la$ just constructed to define $\tilde L(t)=\sup_{s\le t}[(\tilde X_0+\tilde W(s)+a\la(s))^-]$ and $\tilde X=\tilde X_0+\tilde W+\tilde L+a\la$. It is clear that $(\tilde X,\tilde L,\tilde W)$ is equal in law to $(\oo X,\oo L,\oo W)$, and thus the former is a strong solution.

For pathwise uniqueness, let $\oo X_0$ and $\oo W$ be given, and let $(\oo X,\oo L,\oo\la)$ and $(\tilde X,\tilde L,\tilde\la)$ be two corresponding solutions. Denote $V=\oo X-\tilde X$. Then
\begin{equation}\label{47}
v(t):=\E V(t)=\E[\oo L(t)+a\oo\la(t)-\tilde L(t)-a\tilde\la(t)] = (1+a)(\oo\la(t)-\tilde\la(t)).
\end{equation}
Thus
\[
V(t)^2=2\int_0^tV(s)(d\oo L(s)+ad\oo\la(s)-d\tilde L(s)-ad\tilde\la(s)),
\]
and because $\oo X=0$ $d\oo L$-a.e.\ while $\tilde X=0$ $d\tilde L$-a.e., we have
\[
V(t)^2\le 2a\int_0^tV(s)(d\oo\la(s)-d\tilde\la(s))
=\frac{2a}{1+a}\int_0^tV(s)dv(s),
\]
where we used \eqref{47}. This shows that $\E[V(t)^2]<\iy$. Moreover,
\[
\E[V(t)^2]\le\frac{2a}{1+a}\int_0^tv(s)dv(s)
=\frac{a}{1+a}v(t)^2.
\]
For $a\le 0$ this shows that $\E[V(t)^2]=0$ for all $t$. For $a>0$, $\E[V(t)^2]\le \frac{a}{1+a}\E[V(t)^2]$ and again $\E[V(t)^2]=0$ for all $t$. This shows $\oo X=\tilde X$ a.s.

Next, because $\oo L(t)$ has finite expectation, in follows from \eqref{MVSDE} that so does $\oo X(t)-\oo X_0$. Moving $\oo X_0$ to the left side of \eqref{MVSDE} and taking expectation on both sides shows that $\oo\la=\tilde\la$. This in turn implies $\oo L=\tilde L$ a.s., completing the proof.
\qed

\subsection{Proof of Proposition \ref{prop2} (propagation of chaos for the penalized system)}
\label{sec32}
For strong existence and pathwise uniqueness see \cite[Theorem 1.1]{sznitman1991topics}.
Next, for the $n\to\iy$ result, note that \eqref{52} can be written as
\[
Y^n_i(t)=X^n_{0,i}+W^n_i(t)+\int_0^t\frp_\eps(Y^n_i(s))ds+a\int_0^t\lan \frp_\eps(Y^n(s))\ran ds+e_{i,n}(t)
\]
where the error term is bounded by $c\eps^{-1}tn^{-1}$, and so the large $n$ asymptotics is the same as that without the error term. Also note that $(Y^n,W^n)$ can be viewed as a system of $n$ particles in $\R^2$ with a bounded, continuous drift coefficient. For this dynamics we can apply classical results in the field. Some authors require moment assumptions on the initial condition; the case without moment assumptions is covered in \cite{gar88}. In particular, the convergence in law of the $k$-tuple appears in \cite[Theorem 4.1]{gar88} and its corollary, where \cite[Section 5.2]{gar88} relaxes the moment conditions on $X_{0,i}$ at the price of assuming bounded coefficients, which holds in our case. This completes the proof of Proposition \ref{prop2}.
\qed

\subsection{Proof of Proposition \ref{prop3} (penalized system as $\eps\to0$)}
\label{sec33}

\begin{lemma}\label{lem07}
There exists $n_0=n_0(a)$ such that for every $\eps\in(0,1)$, $n\ge n_0$ and $t\ge0$,
\[
\E[(X^n_1(t)-Y^n_1(t))^2]
\le 6(1+a)\{\E[(\|Y_1^{n,-}\|_t)^2]\E[L^n_1(t)^2]\}^{1/2}.
\]
\end{lemma}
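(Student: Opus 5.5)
The plan is to mimic the pathwise-uniqueness argument from the proof of Proposition \ref{prop1}, now at the level of the $n$-particle system. Since $X^n$ and $Y^n$ are driven by the same $(X^n_0,W^n)$, the difference has no martingale part. Set $D_i=L_i-\La_i$ and $V_i=X_i-Y_i$. By \eqref{51} and \eqref{52},
\[
V_i=D_i+a\lan D\ran_i ,
\]
and $V_i$ is continuous and of bounded variation. Hence $V_i(t)^2=2\int_0^tV_i\,dD_i+2a\int_0^tV_i\,d\lan D\ran_i$ holds pathwise.

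\textbf{Step 1: the diagonal terms.} For $\int V_i\,dD_i$, use the boundary property and the sign structure of the penalty.
\begin{itemize}
\item First, $\int V_i\,dL_i=-\int Y_i\,dL_i\le \|Y_i^-\|_tL_i(t)$, because $X_i=0$ $dL_i$-a.e.
\item Second, $-\int V_i\,d\La_i=\int (Y_i-X_i)\,d\La_i\le\int Y_i\,d\La_i\le0$, because $X_i\ge0$ and $d\La_i$ charges only $\{Y_i<0\}$.
\end{itemize}
Thus $\int_0^tV_i\,dD_i\le\|Y_i^-\|_tL_i(t)$.

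\textbf{Step 2: the interaction term, summed over $i$.} Use the exact identities
\[
\lan D\ran_i=\frac{n\lan D\ran-D_i}{n-1},\qquad \sum_iV_i=n(1+a)\lan D\ran .
\]
These give
\[
\sum_i\int V_i\,d\lan D\ran_i=\frac{n^2(1+a)}{2(n-1)}\lan D\ran^2-\frac1{n-1}\sum_i\int V_i\,dD_i .
\]
Substituting $\lan D\ran=\lan V\ran/(1+a)$ then yields the identity
\[
\sum_iV_i(t)^2-\frac{a n^2}{(n-1)(1+a)}\lan V(t)\ran^2=\Big(2-\frac{2a}{n-1}\Big)\sum_i\int_0^tV_i\,dD_i .
\]
For $a>0$, bound $n\lan V\ran^2\le\sum_iV_i^2$ by Jensen. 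The left side is then at least $\frac{1}{1+a}\big(1-\frac{a}{n-1}\big)\sum_iV_i^2$, which is a positive multiple of $\sum_iV_i^2$ once $n\ge n_0(a)$; for instance, take $a/(n-1)\le1/2$. This is where $n_0(a)$ enters. For $-1<a\le0$, the subtracted term is nonnegative, so the left side is at least $\sum_iV_i^2$ directly.

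In either case, combining with Step 1 gives
\[
\sum_iV_i(t)^2\le C_{a,n}\sum_i\|Y_i^-\|_tL_i(t).
\]
For $a>0$ this comes out as $C_{a,n}=2(1+a)\frac{1+|a|/(n-1)}{1-a/(n-1)}\le 6(1+a)$ for $n\ge n_0$.

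\textbf{Step 3: expectation.} Take expectations and use exchangeability of $(V_i,Y_i,L_i)_i$ to reduce both sides to particle $1$. Then apply Cauchy--Schwarz to $\E[\|Y_1^-\|_tL_1(t)]$, which gives the stated form.

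\textbf{Main obstacle.} The delicate point is the algebra of Step 2 and tracking the constant. For $a>0$ one must absorb the positive mean-field term using the factor $(1+a)^{-1}$, which is what produces $(1+a)$ in the final constant. For $a\le0$ the factor $(1+a)$ must instead be extracted more carefully, for example by exploiting the full quadratic form rather than discarding $\lan V\ran^2$. I would first try this refinement and, if needed, settle for a constant depending only on $a$, which suffices for Proposition \ref{prop3}.
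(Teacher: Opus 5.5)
Your argument is the paper's argument with different bookkeeping, and Steps 1--3 are correct, but they reach the constant $6(1+a)$ only when $a>-2/3$. The paper writes $L_i+a\lan L\ran_i=\al_nL_i+a\beta_n\lan L\ran$ with $\al_n=1-\frac{a}{n-1}$ and $\beta_n=\frac{n}{n-1}$; this is your identity $\lan D\ran_i=(n\lan D\ran-D_i)/(n-1)$. It bounds the diagonal terms as in your Step 1. It then uses $\lan X-Y\ran=(1+a)\lan L-\La\ran$ to turn the interaction term into $\frac{a\beta_n}{1+a}\lan X-Y\ran^2$, and finishes with exchangeability and Cauchy--Schwarz.

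For $a>0$ your steps prove the lemma. In fact the factor $2-\frac{2a}{n-1}$ cancels exactly, giving the constant $2(1+a)$.

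For $-1<a\le0$ you get the constant $2+\frac{2|a|}{n-1}$. Enlarging $n_0(a)$ brings this below $6(1+a)$ only when $a>-2/3$. For $a\in(-1,-2/3]$ one has $6(1+a)\le2<2+\frac{2|a|}{n-1}$, so the stated inequality is not established there.

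The refinement you suggest cannot close this gap. The extra nonnegative term $\frac{|a|n^2}{(n-1)(1+a)}\lan V\ran^2$ only involves the squared mean of the $V_i$. Trading it for a positive multiple of $\sum_iV_i^2$ would require a reverse Jensen inequality $\lan V\ran^2\ge\theta\lan V^2\ran$. In expectation, that means a lower bound on $\E[V_1V_2]$ by a multiple of $\E[V_1^2]$, and nothing in the setting provides one.

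The paper's proof has the same defect, hidden in notation. After averaging, it has $\lan (X(t)-Y(t))^2\ran$ on the left but $\frac{a\beta_n}{\al_n+a\beta_n}\lan X(t)-Y(t)\ran^2$ on the right, and it absorbs the latter into the former. For $a>0$ this is justified by $\lan X-Y\ran^2\le\lan (X-Y)^2\ran$. For $a<0$ it uses that inequality in the wrong direction. Read literally, the resulting display bounds only the squared average, to which exchangeability does not apply. Dropping the nonpositive term, as you do, gives the constant $2\al_n<3$.

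So both arguments establish the lemma with a constant such as $\max\{3,6(1+a)\}$ for $n\ge n_0(a)$. Proposition~\ref{prop3} only needs a constant independent of $n$ and $\eps$, so your fallback is the right repair. State it explicitly as a modified constant rather than presenting it as a proof of $6(1+a)$ for all $a>-1$.
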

\proof
Write the two systems as
\begin{equation}\label{22}
\begin{split}
X_i(t)&=X_{0,i}+W_i(t)+\al_nL_i(t)+a\beta_n\lan L(t)\ran
\\
Y_i(t)&=X_{0,i}+W_i(t)+\al_n\int_0^t\frp_\eps(Y_i(s))ds+a\beta_n\int_0^t\lan\frp_\eps(Y(s))\ran ds,
\end{split}
\end{equation}
where $\al_n=1-\frac{a}{n-1}$, $\beta_n=\frac{n}{n-1}$. Then
\begin{align*}
(X_i(t)-Y_i(t))^2
&= 2\int_0^t(X_i-Y_i)[\al_ndL_i-\al_n\frp_\eps(Y_i)ds +a\beta_nd\lan L\ran-a\beta_n\lan \frp_\eps(Y)\ran ds]
\\
&\le 2\al_n\int_0^t Y_i^-dL_i
+2a\beta_n\int_0^t(X_i-Y_i)[d\lan L\ran-\lan \frp_\eps(Y)\ran ds].
\end{align*}
Averaging,
\begin{align*}
\lan (X(t)-Y(t))^2\ran &\le 2\al_n\Big\lan\int_0^t Y^-dL \Big\ran
+2a\beta_n\int_0^t\lan X-Y\ran[d\lan L\ran-\lan \frp_\eps(Y)\ran ds].
\end{align*}
Note that by \eqref{22},
\[
\begin{split}
\lan X(t)\ran&=\lan X_0+W(t)\ran +(\al_n+a\beta_n)\lan L(t)\ran
\\
\lan Y(t)\ran&=\lan X_0+W(t)\ran+(\al_n+a\beta_n)\int_0^t\lan\frp_\eps(Y)\ran ds
\end{split}
\]
which gives
\[
d\lan L\ran-\lan\frp_\eps(Y)\ran dt=\frac{d\lan X-Y\ran}{\al_n+a\beta_n}.
\]
Because the quadratic variation of $\lan X-Y\ran$ vanishes, $\int_0^t\lan X-Y\ran d\lan X-Y\ran=\frac{1}{2}\lan X-Y\ran^2(t)$, hence
\[
\lan X(t)-Y(t)\ran^2\le 2\al_n\Big\lan \int_0^tY^-dL\Big\ran+\frac{a\beta_n}{\al_n+a\beta_n}\lan X(t)-Y(t)\ran^2.
\]
Note that $2\al_n\to2$ and $1-\frac{a\beta_n}{\al_n+a\beta_n}\to\frac{1}{1+a}\in(0,\iy)$. Let $n_0=n_0(a)$ be such that for all $n\ge n_0$, $2\al_n<3$ and $1-\frac{a\beta_n}{\al_n+a\beta_n}>\frac{1}{2(1+a)}$. Then for $n\ge n_0$,
\[
\lan X(t)-Y(t)\ran^2\le 6(1+a)\Big\lan \int_0^t Y^-dL\Big\ran.
\]
By exchangeability,
\[
\E[(X_1(t)-Y_1(t))^2]\le 6(1+a)\,\E\int_0^tY^-_1dL_1
\le 6(1+a)\{\E[(\|Y_1^-\|_t)^2]\E[L_1(t)^2]\}^{1/2}.
\]
\qed

\begin{lemma}\label{lem05}
For every $t$, $\lim_{\eps\to0}\sup_n\E[\sup_{s\in[0,t]}(Y^n_1(s)^-)^2]=0$.
\end{lemma}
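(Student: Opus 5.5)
The plan is to derive a pathwise, one-dimensional penalty estimate that controls $Y_1^-$ by the modulus of continuity of the \emph{unpenalized part} of the dynamics. I will then make that modulus uniform in $n$ using averaging arguments of the kind in Lemma \ref{lem03}.

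\textbf{Step 1: deterministic penalty estimate.} Write $Y_i=Z_i+\La_i$ with
\[
Z_i=X_{0,i}+W_i+a\lan\La\ran_i,\qquad \dot\La_i=\frp_\eps(Y_i).
\]
Fix $s\le t$ with $Y_i(s)<-\eps-h$ for some $h>0$, and let $u<s$ be the last time before $s$ at which $Y_i\ge-\eps$. Such a time exists because $Y_i(0)=X_{0,i}\ge0$. On $[u,s]$ we have $\dot\La_i=\eps^{-1}$, so
\[
-h> Y_i(s)-Y_i(u)=Z_i(s)-Z_i(u)+(s-u)/\eps .
\]
This yields two consequences.
\begin{itemize}
\item First, $(s-u)\le \eps\, w_t(Z_i,t)\le 2\eps\|Z_i\|_t$.
\item Second, $h< w_t(Z_i,s-u)$.
\end{itemize}
Combining them gives the deterministic bound
\[
\|Y_i^-\|_t\le \eps+w_t\big(Z_i,2\eps\|Z_i\|_t\big).
\]
When $a\ge0$, $a\lan\La\ran_i$ is nondecreasing, so only downward increments of $Z_i$ matter. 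One can then replace $w_t(Z_i,\cdot)$ by $w_t(W_i,\cdot)$, with $\|Z_i\|_t$ replaced by $\|X_{0,i}+W_i\|_t$ in the time scale; this case is then immediate.

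\textbf{Step 2: uniform-in-$n$ control of $\La$ for $-1<a<0$.} Since $\La_i$ grows only while $Y_i<0$, a Skorohod-type comparison gives
\[
\La_i(t)\le \sup_{r\le t}Z_i(r)^-,
\qquad
\La_i(t_2)-\La_i(t_1)\le \sup_{t_1\le r\le t_2}(Z_i(t_1)-Z_i(r))^+ + Y_i^-(t_1).
\]
The first inequality yields
\[
\La_i(t)\le \|W_i\|_t+|a|\lan\La(t)\ran_i .
\]
Averaging over $i$ as in \eqref{r2} and \eqref{x1} then gives $\La_i(t)\le \|W_i\|_t+2|a|c_a\lan\|W\|_t\ran$, uniformly in $\eps$. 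Similarly, the increment bound gives
\[
w_t(\La_i,\del)\le w_t(W_i,\del)+|a|\,\lan w_t(\La,\del)\ran_i+\|Y_i^-\|_t .
\]
Averaging and using $|a|<1$ closes this into
\[
\lan w_t(\La,\del)\ran\le c_a\big(\lan w_t(W,\del)\ran+\lan\|Y^-\|_t\rangle\big),
\]
the analog of \eqref{x2}. Plugging this into Step 1 gives
\[
\|Y_1^-\|_t\le \eps+w_t(W_1,\del)+C\big(\lan w_t(W,\del)\ran+\lan\|Y^-\|_t\ran\big)
\]
on the event where $2\eps\|Z_1\|_t\le\del$. The $\lan\|Y^-\|_t\ran$ term carries a factor $|a|$ in front. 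So after averaging once more over $i$ and using exchangeability, it can be absorbed on the left because $|a|c_a$-type constants are compatible with $|a|<1$; I would track constants as in \eqref{r2}.

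\textbf{Step 3: taking expectations.} Choose $\del=\del(\eps)=2\eps K$ and split according to whether $\|Z_1\|_t\le K$. By Step 2, $\|Z_1\|_t$ is bounded by $X_{0,1}$ plus a quantity with uniformly bounded second moments, so the probability of the complement is small uniformly in $n$ and $\eps$ for large $K$. The crude bound $\|Y^-_1\|_t\le \|Z_1\|_t$ has second moments that are uniformly bounded (note $X_{0,1}\ge0$ does not contribute to negative parts), which gives uniform integrability for the contribution on the complement. On the main event, the terms to control are $\E[w_t(W_1,2\eps K)^2]$ and $\E\lan w_t(W,2\eps K)\ran^2\le \E[w_t(W_1,2\eps K)^2]$, and both tend to $0$ as $\eps\to0$ independently of $n$. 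Sending $\eps\to0$ and then $K\to\iy$ proves the lemma.

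The main obstacle is Step 2 in the case $a<0$. There, closing the averaged inequality for the modulus of $\lan\La\ran$ requires absorbing the $\lan\|Y^-\|_t\ran$ term self-consistently; this works only because the contraction constant involves $|a|<1$. This is where I expect the real care to be needed.
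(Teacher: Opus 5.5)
Your Step 1 is correct, and so is your treatment of $a\ge0$. The gap is in Step 2 for $a<0$, and it is not a matter of bookkeeping. Put $M=\lan w_t(\La,\del)\ran$, $D=\lan\|Y^-\|_t\ran$ and $\omega=\lan w_t(W,\del)\ran$. Averaged over $i$, your two inequalities read $M\le\omega+|a|M+D$ and $D\le\eps+\omega+|a|M$. Substituting the first into the second, the coefficient of $D$ is $|a|c_a=|a|/(1-|a|)$, not $|a|$. Equivalently, eliminating $D$ gives $(1-2|a|)M\le\eps+2\omega$.

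This closes only for $|a|<1/2$. For $a\in(-1,-1/2]$ the pair of inequalities is satisfied by arbitrarily large $M$ and $D$, so no rearrangement of them can yield the lemma. The underlying reason is that your Skorohod-type estimates never use that $\La_j$ grows at rate at most $\eps^{-1}$. The push-down on particle $1$ from the others is charged at the full modulus of $\lan\La\ran_1$, regardless of how fast it arrives, while particle $1$'s own compensating penalty during that time is ignored.

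There is also a secondary problem. As written, averaging the Step 1 inequality over $i$ pathwise requires $2\eps\|Z_i\|_t\le\del$ for \emph{every} $i\in[n]$. For fixed $K$, the probability of that event tends to $0$ as $n\to\iy$, so your Step 3 split on $\{\|Z_1\|_t\le K\}$ does not support the averaged inequality.

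The missing idea, which is exactly what the paper uses, is the rate comparison that comes from $0\le\frp_\eps\le\eps^{-1}$. On the interval $[u,s]$ of your Step 1 one has $Y_1\le-\eps$, so the self-penalty runs at rate exactly $\eps^{-1}$, whereas $a\lan\frp_\eps(Y)\ran_1\ge-a^-\eps^{-1}$. Hence
\[
Y_1(s)-Y_1(u)\ge W_1(s)-W_1(u)+(1-a^-)\eps^{-1}(s-u).
\]
Your own Step 1 argument then yields, with no Step 2 at all,
\[
\|Y_1^-\|_t\le\eps+w_t\big(W_1,\,2\eps\|W_1\|_t/(1-a^-)\big)
\]
(time scale capped at $t$), a bound involving $W_1$ alone. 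The law of $W^n_1$ does not depend on $n$, and $w_t(W_1,\cdot)\le2\|W_1\|_t$, so dominated convergence gives the lemma uniformly in $n$. The paper uses the same drift comparison (net drift at least $C_a\eps^{-1}$ with $C_a=1+(a\w0)$), combined with Gaussian tail bounds for $\|W_1\|_t$ and $w_t(W_1,\kap)$ rather than dominated convergence.
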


\proof
Fix $t$.
Denoting $A=A(\eps,n)=\sup_{[0,t]}Y^n_1(s)^-$, write
\begin{equation}\label{41}
\E[A^2]=\int_0^\iy\PP(A^2>\eta)d\eta\le\eta_0+\int_{\eta_0}^\iy\PP(A>\eta^{1/2})d\eta.
\end{equation}
For $\eta$ such that $\frac{1}{2}\eta^{1/2}>\eps$, on the event $A>\eta^{1/2}$, there exist $0\le\sig<\tau\le t$ such that $Y_1(\sig)=-\frac{1}{2}\eta^{1/2}$, $Y_1(\tau)=-\eta^{1/2}$ and $Y_1\le-\eps$ on $[\sig,\tau]$. Using this in \eqref{52}, we have on the same event
\begin{align*}
-\frac{1}{2}\eta^{1/2}&=Y_1(\tau)-Y_1(\sig)\\
&= W_1(\tau)-W_1(\sig)+\eps^{-1}(\tau-\sig)+a\int_\sig^\tau\lan\frp_\eps(Y(s))\ran_ids\\
&\ge W_1(\tau)-W_1(\sig)+C_a\eps^{-1}(\tau-\sig),
\\
C_a&:=1+(a\w 0)\in(0,1].
\end{align*}
Let $\kap>0$. Then, on the event $A>\eta^{1/2}$, the condition $\tau-\sig>\kap$ implies $0\ge W_1(\tau)-W_1(\sig)+C_a\eps^{-1}\kap+\eta^{1/2}$,
hence $2\|W_1\|_t\ge C_a\eps^{-1}\kap+\eta^{1/2}$; whereas the condition
$\tau-\sig\le\kap$ implies $w_t(W_1,\kap)\ge\eta^{1/2}$.
With $c$ a positive constant (depending on $t$) whose value may change from one appearance to another, one has
\begin{equation}\label{b20}
\PP(\|W\|_t>x)\le ce^{-cx^2},
\qquad
\PP(w_t(W,\del)>x)\le c\del^{-1}e^{-c x^2\del^{-1}},
\qquad
x>0.
\end{equation}
Combining, we have
\[
\PP(A>\eta^{1/2})\le ce^{-c(\eps^{-1}\kap+\eta^{1/2})^2}+c\kap^{-1}e^{-c\eta\kap^{-1}}
\le
ce^{-c\eps^{-2}\kap^2-c\eta}+c\kap^{-1}e^{-c\eta\kap^{-1}}.
\]
Using this in \eqref{41} gives
\[
\E[A^2]\le\eta_0+ce^{-c\eps^{-2}\kap^2}+ce^{-c\eta_0\kap^{-1}}.
\]
Choosing $\kap=\eps^{1/2}$ and $\eta_0=\eps^{1/4}$, the expression on the right, which does not depend on $n$, goes to $0$ as $\eps\to0$.
\qed

\begin{lemma}\label{lem06}
For every $t>0$ and $\theta>0$ there exist $\del>0$ and $\eps_0>0$ such that
\[
\sup_{0<\eps<\eps_0}\sup_n\PP(w_t(Y^n_1,\del)\vee w_t(\La^n_1,\del)>\theta)<\theta.
\]
\end{lemma}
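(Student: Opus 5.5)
The plan is to transfer the argument of Lemma \ref{lem03} (in particular the modulus estimates \eqref{r02}, \eqref{r3}, \eqref{x2}) from the Skorohod map to the penalized dynamics. The only new ingredient is an extra error term $\|Y^{n,-}_i\|_t$, which Lemma \ref{lem05} controls uniformly in $n$. Everything will be done in expectation, and Markov's inequality then gives the statement.

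\emph{Step 1: a deterministic increment bound.} Let $z=x+\ell$ with $x$ continuous and $\ell$ continuous nondecreasing, where $\ell$ increases only on $\{r:z(r)<0\}$. We aim to show that for $0\le s<u\le t$,
\[
\ell(u)-\ell(s)\le z(s)^-+\sup_{r\in[s,u]}\big(x(s)-x(r)\big).
\]
Let $u'=\inf\{r\ge s:\ell(r)=\ell(u)\}$. If $u'=s$ there is nothing to prove. Otherwise $\ell$ increases in every left neighbourhood of $u'$, so continuity forces $z(u')\le0$. Then
\[
\ell(u)-\ell(s)=z(u')-z(s)-(x(u')-x(s))\le z(s)^-+x(s)-x(u').
\]
We apply this with $z=Y_i$, $\ell=\La_i$ and $x=X_{0,i}+W_i+a\lan\La\ran_i$, which is legitimate since $\frp_\eps(y)=0$ for $y\ge0$. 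This gives
\[
w_t(\La_i,\del)\le\|Y_i^-\|_t+w_t(W_i,\del)+|a|\,w_t(\lan\La\ran_i,\del).
\]
When $a\ge0$ the last term can be dropped, because $a\lan\La\ran_i$ is nondecreasing and so $x(s)-x(r)\le W_i(s)-W_i(r)$.

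\emph{Step 2: averaging.} For $a\ge0$ we get $\E w_t(\La_1,\del)\le\E\|Y_1^-\|_t+\E w_t(W_1,\del)$ directly. For $-1<a<0$ we use $w_t(\lan\La\ran_i,\del)\le\lan w_t(\La,\del)\ran_i$ and average over $i$, noting that the average over $i$ of $\lan v\ran_i$ equals $\lan v\ran$. Exactly as in the proof of Lemma \ref{lem03}, this yields
\[
\lan w_t(\La,\del)\ran\le c_a\big(\lan\|Y^-\|_t\ran+\lan w_t(W,\del)\ran\big),\qquad c_a=(1-|a|)^{-1}.
\]
By exchangeability, in either case
\[
\E w_t(\La_1,\del)\le c_a\big(\E\|Y_1^-\|_t+\E w_t(W_1,\del)\big).
\]
Here $\E\|Y_1^{n,-}\|_t\le(\E[\|Y_1^{n,-}\|_t^2])^{1/2}\to0$ as $\eps\to0$ uniformly in $n$ by Lemma \ref{lem05}. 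Also $\E w_t(W_1,\del)\le c(\del\log(t/\del))^{1/2}\to0$ as $\del\to0$, independently of $n$.

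\emph{Step 3: the modulus of $Y_1$.} From \eqref{52},
\[
w_t(Y_1,\del)\le w_t(W_1,\del)+w_t(\La_1,\del)+|a|\,w_t(\lan\La\ran_1,\del).
\]
By exchangeability, $\E w_t(\lan\La\ran_1,\del)\le\E\lan w_t(\La,\del)\ran_1=\E w_t(\La_1,\del)$. Hence $\E[w_t(Y_1,\del)\vee w_t(\La_1,\del)]$ is at most a constant depending only on $a$ times $\E\|Y_1^-\|_t+\E w_t(W_1,\del)$.

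Given $\theta$, we first choose $\eps_0$ so that the constant times $\sup_n\E\|Y_1^{n,-}\|_t$ is below $\theta^2/2$ for all $\eps<\eps_0$. We then choose $\del$ so that the Brownian term is also below $\theta^2/2$. Markov's inequality then gives the claim.

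The main obstacle is Step 1. The penalized local time is not given by an explicit Skorohod formula like Lemma \ref{lem02}, so one needs the one-sided increment bound above. The only price is the term $z(s)^-$, which is exactly what Lemma \ref{lem05} was designed to control.
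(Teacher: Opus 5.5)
Your proof is correct and follows essentially the same route as the paper. You bound $w_t(\La_1,\del)$ by $\|Y_1^-\|_t+w_t(W_1,\del)+a^-\lan w_t(\La,\del)\ran_1$ using an endpoint where $Y_1\le0$, absorb the last term through exchangeability (your averaging over $i$ is equivalent to the paper's identity $\E\lan w_t(\La,\del)\ran_1=\E w_t(\La_1,\del)$), conclude with Lemma \ref{lem05} and Markov's inequality, and control $w_t(Y_1,\del)$ via \eqref{52}. Two small remarks: your Step 1 is a more careful version of the paper's ``without loss of generality $Y_1\le0$ at both endpoints'' (only the right endpoint is needed); and the constant in Step 2 should be $(1-a^-)^{-1}$ rather than $(1-|a|)^{-1}$, so that the bound also covers $a\ge1$.
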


\proof
To estimate $w_t(\La_1,\del)$, let $0\le\sig\le\tau\le t$ be such that $\tau-\sig\le\del$ and
$\La_1(\tau)-\La_1(\sig)=w_t(\La_1,\del)$. Since $\La_1$ increases only when $Y_1\le0$, we may assume without loss of generality that, at both $\sig$ and $\tau$, $Y_1\le0$. Thus by \eqref{52},
\begin{align*}
w_t(\La_1,\del) &= Y_1(\tau)-Y_1(\sig)-W_1(\tau)+W_1(\sig)-a\lan \La(\tau)-\La(\sig)\ran_1
\\
&\le \|Y_1^-\|_t+w_t(W_1,\del)+a^-w_t(\lan \La\ran_1,\del)
\\
&\le \|Y_1^-\|_t+w_t(W_1,\del)+a^-\lan w_t(\La,\del)\ran_1.
\end{align*}
With $c=1-a^-\in(0,1]$, this gives
\[
c\,\E[w_t(\La_1,\del)]\le\E[\|Y^-_1\|_t]+\E w_t(W_1,\del),
\]
and the estimate on $w_t(\La_1,\del)$ follows by Chebychev's inequality and Lemma \ref{lem05}.

As for the estimate on $w_t(Y_1,\del)$,
using \eqref{52} and again the inequality $w_t(\lan\La\ran_1,\del)\le \lan w_t(\La,\del)\ran_1$, we get
\[
\E w_t(Y_1,\del)\le \E w_t(W_1,\del)+(1+|a|)\E w_t(\La_1,\del),
\]
and the estimate follows from the one on $w_t(\La_1,\del)$.
\qed

\noi{\bf Proof of Proposition \ref{prop3}.}
If we combine Lemma \ref{lem07}, Lemma \ref{lem05} and the moment bound from Lemma \ref{lem03}, we get that
\begin{equation}\label{b1}
\lim_{\eps\to0}\sup_{n\ge n_0}\E[(X^n_1(t)-Y^n_1(t))^2]=0.
\end{equation}
This gives $Y^n_1\to X^n_1$ pointwise in probability, as $\eps\to0$, uniformly in $n\ge n_0$. To strengthen this to convergence in $C^{(1)}$, it suffices to show that given $t>0$ and $\theta>0$ there exist $\del>0$ and $\eps_0>0$ such that $\sup_{0<\eps<\eps_0}\sup_n\PP(w_t(Y^n_1-X^n_1,\del)>\theta)<\theta$. This follows from the tightness of $X^n_1$ proved in Lemma \ref{lem03} and the estimate from Lemma \ref{lem06}.

Next, toward showing $\La^n_1\to L^n_1$, note by averaging in \eqref{51} and \eqref{52},
\[
\lan X-Y\ran = (1+a)\lan L-\La\ran,
\]
hence, with $c=(1+a)^{-1}$,
\[
|\lan L\ran-\lan\La\ran| \le c\lan |X-Y|\ran.
\]
Recall the $(\al_n,\beta_n)$ representation from \eqref{22}. Then, with $c_n=|a|\beta_n\le2|a|$,
\[
\al_n|L_1-\La_1|\le |X_1-Y_1|+c_n|\lan L\ran-\lan\La\ran|\le |X_1-Y_1|+2c|a|\lan |X-Y|\ran,
\]
and because $\al_n>1/2$ for large $n$,
\[
\E|L_1(t)-\La_1(t)|\le 2(1+c|a|)\E|X_1(t)-Y_1(t)|.
\]
In view of \eqref{b1},
\[
\lim_{\eps\to0}\sup_{n\ge n_0}\E|L^n_1(t)-\La^n_1(t)|=0,
\]
and the pointwise convergence in probability of $\La^n_1\to L^n_1$ as $\eps\to0$, uniformly in $n\ge n_0$, follows. With the help of Lemma \ref{lem06}, one deduces convergence in $C^{(1)}$ in exactly the same way as in the argument above for $Y^n_1\to X^n_1$.
\qed

\subsection{Proof of Proposition \ref{prop4} (nonlinear diffusion as $\eps\to0$)}
\label{sec34}

We have
\begin{align*}
(\oo X(t)-\oo Y(t))^2
&=2\int_0^t(\oo X-\oo Y)[d\oo L-\frp_\eps(\oo Y)ds+a\,d\,\E\oo L-a\,\frp_\eps(\oo Y)ds]
\\
&\le 2\int_0^t\oo Y^-d\oo L+2a\int_0^t(\oo X-\oo Y)(d\,\E\oo L-\E\frp_\eps(\oo Y)ds).
\end{align*}
Now,
\[
\E(\oo X(t)-\oo Y(t))=(1+a)\Big[\E\oo L(t)-\int_0^t\E\frp_\eps(\oo Y)ds\Big].
\]
Hence
\[
(\oo X(t)-\oo Y(t))^2 \le 2\int_0^t\oo Y^-d\oo L + \frac{2a}{1+a}\int_0^t(\oo X-\oo Y)d\,\E(\oo X-\oo Y).
\]
Denoting $\Del(t)=\E[(\oo X(t)-\oo Y(t))^2]$,
\begin{align*}
\Del(t) &\le 2\,\E\int_0^t\oo Y^-d\oo L+\frac{2a}{1+a}\int_0^t\E(\oo X-\oo Y)d\,\E(\oo X-\oo Y)
\\
&=2\,\E\int_0^t\oo Y^-d\oo L+\frac{a}{1+a}\Del(t)
\end{align*}
thus
\begin{equation}\label{b2}
\Del(t)\le 2(1+a)\E\int_0^t\oo Y^-d\oo L \le c\{\E[(\|\oo Y^-\|_t)^2]\E[\oo L(t)^2]\}^{1/2}.
\end{equation}
To show that for every $t$, $\Del(t)\to0$ as $\eps\to0$, observe by Lemma \ref{lem02} that
\[
\oo L(t)=\sup_{s\le t}[(\oo X_0+\oo W(s)+a\la(s))^-]
\le \sup_{s\le t}[(\oo W(s)+a\la(s))^-]\le \|\oo W\|_t+|a|\la(t).
\]
Hence $\E[\oo L(t)^2]<\iy$. Moreover, $\E[(\|\oo Y^-\|_t)^2]\to0$ as $\eps\to0$, which is proved along the lines of the proof of Lemma \ref{lem05}; we skip the details. By \eqref{b2}, $\Del(t)\to0$ as $\eps\to0$, for every $t$.

For every $t$ and $\theta>0$ there exist $\del>0$ and $\eps_0>0$ such that
\begin{equation}\label{b3}
\sup_{0<\eps<\eps_0}\PP(w_t(\oo Y,\del)\vee w_t(\oo\La,\del)>\theta)<\theta.
\end{equation}
Again, the proof of this fact is very similar to that of Lemma \ref{lem06} and we skip the details.
Given the pointwise convergence $\Del(t)\to0$, \eqref{b3} establishes that $\oo Y\to\oo X$ in $C^{(1)}$ in probability, as $\eps\to0$.

Next, by \eqref{MVSDE} and \eqref{23}, with $c=(1+a)^{-1}\in(0,\iy)$,
\[
|\E\oo\La(t)-\E\oo L(t)|=c\,|\E(\oo Y(t)-\oo X(t))|
\le c\,\E|\oo Y(t)-\oo X(t)|,
\]
hence
\[
|\oo\La(t)-\oo L(t)|\le|\oo Y(t)-\oo X(t)|+|a|\, |\E\oo\La(t)-\E\oo L(t)|,
\]
and we obtain $\E|\oo\La(t)-\oo L(t)|\le (1+c|a|)\,\E|\oo Y(t)-\oo X(t)|\to0$ as $\eps\to0$ in view of the fact $\Del(t)\to0$. This shows $\oo\La\to \oo L$ pointwise in probability. Convergence in probability in $C^{(1)}$ again follows from \eqref{b3}.
\qed

\section{The case of random reflection coefficients}

Our approach to dealing with random reflection is to couple the SRBM with random reflection to the one with deterministic reflection, and use the established result about the latter.

For $q=Q$ and $\mathbb{Q}$, let $\E^q$ denote expectation w.r.t.\ $q$ on the respective space.
Recall that the tuple $(\hat X^n_0,\hat W^n,\hat X^n,\hat L^n,\rho^n)$ constructed on $(\mathbb{X},\mathbb{A})$ satisfies \eqref{01b}, that is,
\begin{equation}\label{b11}
\hat X^n_i(t)=\hat X^n_{0,i}+\hat W^n_i(t)+\hat L^n_i(t)+\frac{1}{n-1}\sum_{j\ne i}\rho^n_{ij}\hat L^n_j(t),
\qquad i\in[n].
\end{equation}
Recall also that $\E^Q[\rho^n_{ij}]=a$ for all $n,i,j$, and let $R^{(n)}=R^{(n,a)}$ be as in \eqref{05}.
Define on $(\mathbb{X},\mathbb{A})$ the SRBM with deterministic reflection, $(X^n,L^n)=\Gamma^{(n)}(\hat X^n_0+\hat W^n;R^{(n,a)})$. Then
\begin{equation}\label{b12}
X^n_i=\hat X^n_{0,i}+\hat W^n_i(t)+L^n_i(t)+a\lan L^n(t)\ran_i,
\qquad i\in[n],
\end{equation}
and $(X^n_i,L^n_i)$ is equal in law to the tuple previously defined on $(\Om,\calF)$ with the same notation. We continue to use this notation, with a slight abuse. Denote
\[
\Del X^n_i=\hat X^n_i-X^n_i,
\qquad
\Del L^n_i=\hat L^n_i-L^n_i.
\]
The main step toward proving Theorem \ref{th2} is to show that our assumptions on $\rho^n_{ij}$ lead to a very strong form of proximity between the two systems, as follows.
\begin{proposition}\label{prop5}
For every $t>0$ and $x>0$,
\begin{equation}\label{b14}
\mathbb{Q}(\max_{i\in[n]}(\|\Del X^n_i\|_t\vee\|\Del L^n_i\|_t)>x \text{ i.o.})=0.
\end{equation}
\end{proposition}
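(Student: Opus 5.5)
The plan is to reduce \eqref{b14} to a uniform bound on a fluctuation term that involves only the deterministic-reflection system $L^n$, and then to control that term with Hoeffding's inequality and the Borel--Cantelli lemma. Subtracting \eqref{b12} from \eqref{b11} and applying Lemma~\ref{lem02} (the one-dimensional Skorohod map $w\mapsto\sup_{s\le\cdot}w(s)^-$ is $1$-Lipschitz in the sup norm) to both systems gives, writing $\hat L_j,L_j$ for $\hat L^n_j,L^n_j$,
\[
\hat L_i(t)=\sup_{s\le t}\Big[\Big(\hat X_{0,i}+\hat W_i(s)+\tfrac{1}{n-1}\textstyle\sum_{j\ne i}\rho_{ij}\hat L_j(s)\Big)^-\Big],
\qquad
L_i(t)=\sup_{s\le t}\big[(\hat X_{0,i}+\hat W_i(s)+a\lan L(s)\ran_i)^-\big].
\]
Define
\[
E_i(t)=\frac{1}{n-1}\sum_{j\ne i}(\rho^n_{ij}-a)L_j(t).
\]
Since $\frac{1}{n-1}\sum_{j\ne i}\rho_{ij}\hat L_j-a\lan L\ran_i=\frac{1}{n-1}\sum_{j\ne i}\rho_{ij}\Del L_j+E_i$, Assumption~\ref{assn2}(i) yields
\[
\|\Del L_i\|_t\le(1-\eps_\rho)\max_j\|\Del L_j\|_t+\|E_i\|_t .
\]
Taking the maximum over $i$ gives $\max_i\|\Del L_i\|_t\le\eps_\rho^{-1}\max_i\|E_i\|_t$. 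This is exactly the step where the bound $|\rho|\le1-\eps_\rho$ is essential, because it provides the contraction. From the difference of the equations, $\|\Del X_i\|_t\le 2\max_j\|\Del L_j\|_t+\|E_i\|_t$. It therefore suffices to show that $\mathbb{Q}(\max_i\|E^n_i\|_t>x\text{ i.o.})=0$ for every $x>0$.

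The key structural fact is that $L^n=\Gamma^{(n)}(\hat X^n_0+\hat W^n;R^{(n,a)})$ is a function of $(\hat X^n_0,\hat W^n)$ alone. Under $\mathbb{Q}$ the law of $(\hat X^n_0,\hat W^n)$ under $Q_\xi$ does not depend on $\xi$, so $L^n$ is independent of $\rho^n$. Conditionally on $L^n$, for fixed $i$ and $s$, $E_i(s)$ is therefore a sum of independent, mean-zero terms bounded by $2L_j(s)/(n-1)$. Hoeffding's inequality gives
\[
\mathbb{Q}\big(|E_i(s)|>x\,\big|\,L\big)\le 2\exp\Big(-\frac{(n-1)x^2}{8K}\Big)
\quad\text{on the event }\Big\{\lan L(s)^2\ran_i\le K\Big\}.
\]
To control $\lan L^2\ran_i$, I would use \eqref{r1} for $a\ge0$ and \eqref{x1} for $a<0$. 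These bound $L_j(t)$ by $\|W_j\|_t+c\lan\|W\|_t\ran$, where the $\|W_j\|_t$ are i.i.d. with Gaussian tails, and crucially the bound does not involve $X_0$, which is only exchangeable. A Cram\'er-type bound, using that $\|W_1\|_t^2$ has small exponential moments, then shows that $\lan L(t)^2\ran_i\le K$ for all $i$ outside an event of probability $e^{-cn}$, for a suitable constant $K$.

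To pass from fixed times to the supremum over $[0,t]$, I would use a grid of mesh $\del$. Between grid points, $|E_i(s)-E_i(s_k)|\le 2\cdot\frac{n}{n-1}\lan w_t(L,\del)\ran$, and by \eqref{x2} (or \eqref{r02} when $a\ge0$) this is bounded by a constant multiple of $\lan w_t(W,\del)\ran$. The latter is an average of i.i.d. variables with exponential tails by \eqref{b20}, so it lies below $2\E w_t(W_1,\del)$ except with probability $e^{-c_\del n}$. Choosing $\del$ small enough that this is less than $x/2$, a union bound over $n$ indices $i$ and $t/\del$ grid points gives
\[
\mathbb{Q}\Big(\max_i\|E_i\|_t>x\Big)\le \frac{nt}{\del}\,2e^{-c n x^2/K}+e^{-cn}+e^{-c_\del n},
\]
which is summable in $n$. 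Borel--Cantelli then yields \eqref{b14}.

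I expect the main obstacle to be this uniform-in-time, uniform-in-$i$ concentration. It requires moduli and moment bounds on $L^n$ that hold with exponentially high probability, not merely in expectation. The estimates \eqref{x1}--\eqref{x2}, which dominate $L^n$ pathwise by i.i.d. functionals of $W^n$, are what make this possible. Only independence across $j$ for each fixed $i$ is needed, because the union bound handles the maximum over $i$.
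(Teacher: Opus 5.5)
Your proof is correct and follows essentially the same route as the paper. The $1$-Lipschitz Skorohod map together with $|\rho^n_{ij}|\le 1-\eps_\rho$ reduces everything to $\max_i\|C_i\|_t$ (your $E_i$), which is then controlled by conditional Hoeffding/Azuma (using independence of $\rho^n$ and $L^n$ under $\mathbb{Q}$), a time grid with modulus bounds from \eqref{x1}--\eqref{x2}, a union bound, and Borel--Cantelli. The differences are only technical: you take the maximum over $j$ directly in the contraction step, which avoids the paper's large-$n$ condition; you condition on a bounded empirical second moment of $L^n$ instead of on $\max_j L^n_j(t)\le n^{1/4}$; and you keep the mesh $\del$ fixed rather than taking $\del=n^{-1}$.
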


Before proving this result we strength the estimates from Lemma \ref{lem03}. In what follows, $c$ will denote a positive constant that may depend on $t$, whose value may change from one appearance to another.

\begin{lemma}\label{lem08}
Fix $t$. Then there are constants $c>0$ such that
for every $n$, $i\in[n]$, $\del>0$ and $r>0$,
\[
\mathbb{Q}(L_i(t)>r)\le ce^{-cr^2},
\qquad
\mathbb{Q}(w_t(L_i,\del)>r)\le c\del^{-1}e^{-cr^2\del^{-1}}.
\]
\end{lemma}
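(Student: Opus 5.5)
The plan is to derive both tail bounds from the pathwise estimates already recorded in the proof of Lemma~\ref{lem03}, namely \eqref{r1}, \eqref{r02}, \eqref{x1} and \eqref{x2}. These express $L_i(t)$ and $w_t(L_i,\del)$ in terms of the driving BM only. Under $\mathbb{Q}$, the process $(X^n,L^n)=\Gamma^{(n)}(\hat X^n_0+\hat W^n;R^{(n,a)})$ is an SRBM driven by the $(n,b,\sig)$-BM $\hat W^n$, so those pathwise inequalities hold $\mathbb{Q}$-a.s. with $W=\hat W^n$. Set $k_a=2|a|c_a$ when $a<0$ and $k_a=0$ when $a\ge0$. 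We then have, uniformly in $n$ and $i$,
\[
L_i(t)\le \|W_i\|_t+k_a\lan \|W\|_t\ran,
\qquad
w_t(L_i,\del)\le w_t(W_i,\del)+k_a\lan w_t(W,\del)\ran .
\]
The problem thus reduces to Gaussian-type tails for a single BM functional plus an empirical average of i.i.d.\ copies of it, with constants not depending on $n$.

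\emph{Step 1: exponential moments for one BM.} From \eqref{b20}, $\PP(\|W_1\|_t>x)\le ce^{-cx^2}$. Integrating the tail gives $\E e^{\gamma\|W_1\|_t^2}\le C$ for some $\gamma>0$. Likewise, writing $\E e^{\gamma w^2/\del}=1+\int_0^\iy\gamma e^{\gamma s}\PP(w_t(W_1,\del)^2/\del>s)\,ds$ and using the second bound in \eqref{b20} with $x=(s\del)^{1/2}$, we get $\E\exp(\gamma w_t(W_1,\del)^2\del^{-1})\le C\del^{-1}$ for $\gamma$ smaller than the constant in the exponent. (For $\del$ bounded below by a multiple of $t$ the claim is trivial after adjusting constants, so we may take $\del\le1$.)

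\emph{Step 2: passing to the average without losing uniformity in $n$.} This is the only point needing care, and I expect it to be the main, though mild, obstacle. For $Z=\lan \zeta\ran$ with $\zeta_j\ge0$ identically distributed, Jensen gives $Z^2\le\lan \zeta^2\ran$. Convexity of the exponential then gives $e^{\gamma Z^2}\le \lan e^{\gamma\zeta^2}\ran$, hence $\E e^{\gamma Z^2}\le \E e^{\gamma \zeta_1^2}$. Independence is not even needed, and the bound is free of $n$.

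\emph{Step 3: combining.} Use $(u+k_av)^2\le 2u^2+2k_a^2v^2$ and Cauchy--Schwarz:
\[
\E e^{\gamma'(u+k_av)^2}\le (\E e^{4\gamma' u^2})^{1/2}(\E e^{4\gamma' k_a^2v^2})^{1/2}.
\]
Choose $\gamma'$ small enough that both factors are controlled by Steps 1--2. This yields $\E e^{\gamma' L_i(t)^2}\le C$ and $\E e^{\gamma' w_t(L_i,\del)^2/\del}\le C\del^{-1}$. Markov's inequality then gives $\mathbb{Q}(L_i(t)>r)\le Ce^{-\gamma' r^2}$ and $\mathbb{Q}(w_t(L_i,\del)>r)\le C\del^{-1}e^{-\gamma' r^2\del^{-1}}$, which is the statement with $c=\min(C^{-1},\gamma')$ after renaming constants.
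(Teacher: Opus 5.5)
Your proof is correct. Like the paper, you reduce the lemma, via \eqref{r1}, \eqref{r02}, \eqref{x1} and \eqref{x2}, to tail bounds for $\|W_i\|_t$, $w_t(W_i,\del)$ and their empirical averages. The difference is in how you handle the averages.

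\textbf{The paper's route.} It applies a Chernoff bound to the i.i.d.\ variables $\|W_j\|_t$ and $w_t(W_j,\del)$. This gives concentration of the form $e^{-cnr^2}$ and $e^{-cnr^2\del^{-1}}$ about the means $m$ and $\tilde m(t,\del)\le c(\del\log(t/\del))^{1/2}$. The range of $r$ below the mean is then absorbed into the constants.

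\textbf{Your route.} You pass to exponential moments and use convexity, $\E e^{\gamma\lan\zeta\ran^2}\le \E e^{\gamma\zeta_1^2}$, followed by Cauchy--Schwarz and Markov.

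\textbf{Comparison.} Your version needs neither independence of the $W_j$ nor any concentration inequality. It also carries the $\del^{-1}$ prefactor of \eqref{b20} through with no extra work. The paper's Chernoff step for $\lan w_t(W,\del)\ran$, by contrast, tacitly needs $w_t(W_1,\del)-\tilde m$ to be sub-Gaussian with variance proxy of order $\del$. That is true, by Gaussian concentration, since $w_t(W,\del)$ is $\sig\del^{1/2}$-Lipschitz in the Cameron--Martin norm. It is not, however, a consequence of \eqref{b20} alone, because of that prefactor. What the paper's route buys is decay in $n$, which the lemma does not require; only uniformity in $n$ is needed here.

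\textbf{A minor point.} The quantity $w_t(f,\del)$ is defined only for $\del\le t$. On that range your Step~1 bound, of the form $1+C'\del^{-1}$, is already at most $C(1+t)\del^{-1}$, so the parenthetical about large $\del$ is unnecessary. It is also not quite right: for $\del\gg t$ the stated bound would actually fail. The left side stays at $\mathbb{Q}(L_i(t)>r)$, while the right side tends to $0$.
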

\proof
For $a\ge0$, by \eqref{r1} and \eqref{r02},
$L_i(t)\le\|W_i\|_t$ and $w_t(L_i,\del)\le w_t(W_i,\del)$ and so by the estimates \eqref{b20} the claim follows.

For $a\in(-1,0)$, with $c_a=(1-|a|)^{-1}$, we have by \eqref{x1} that $L_i(t)\le\|W_i\|_t+2|a|c_a\lan\|W\|_t\ran$.
Since $\|W_i\|_t$ are i.i.d., we have by a Chernoff bound, $\mathbb{Q}(\lan\|W\|_t\ran-m>r)\le e^{-cnr^2}$, $r>0$, where $m=\E^{\mathbb{Q}}[\|W_1\|_t]$. Hence
\[
\mathbb{Q}(\lan\|W\|_t\ran>r)\le ce^{-cr^2},
\qquad r>0,
\]
and the first estimate follows. Next, by \eqref{x2},
$w_t(L_i,\del)\le w_t(W_i,\del)+2|a|c_a\lan w_t(W,\del)\ran$. In view of \eqref{b20}, it remains to show that $\mathbb{Q}(\lan w_t(W,\del)\ran>z)\le c\del^{-1}e^{-cz^2\del^{-1}}$. Chernoff bound gives,
with $\tilde m=\tilde m(t,\del)=\E^{\mathbb{Q}}[w_t(W_1,\del)]$
\[
\mathbb{Q}(\lan w_t(W,\del)\ran-\tilde m(t,\del)>r)\le e^{-cnr^2\del^{-1}},\qquad r>0.
\]
By the tail bound in \eqref{b20}, $\tilde m(t,\del)\le c(\del\log(t/\del))^{1/2}$. Combining, we get for all $r>0$,
\[
\mathbb{Q}(\lan w_t(W,\del)\ran>r)\le c\del^{-1}e^{-cr^2\del^{-1}}.
\]
\qed

\noi{\bf Proof of Proposition \ref{prop5}.}
By Lemma \ref{lem02},
\[
\hat L_i(t)=\sup_{s\le t}\Big\{\Big[\hat X_{0,i}+\hat W_i(s)+\frac{1}{n-1}\sum_{j\ne i}\rho_{ij} \hat L_j(s)\Big]^- \Big\},
\]
and
\[
L_i(t)=\sup_{s\le t}\{[\hat X_{0,i}+\hat W_i(s)+a\lan L(s)\ran_i]^-\}.
\]
Denote
\[
B_i(t)=\frac{1}{n-1}\sum_{j\ne i}\rho_{ij}(\hat L_j(t)-L_j(t)),
\qquad
C_i(t)=\frac{1}{n-1}\sum_{j\ne i}(\rho_{ij}-a)L_j(t).
\]
Then
\[
\|\Del L_i\|_t\le \|B_i\|_t+\|C_i\|_t
\le(1-\eps_\rho)\lan \|\Del L\|_t\ran_i+\|C_i\|_t.
\]
Hence
\(
\lan\|\Del L\|_t\ran\le\eps_\rho^{-1}\lan\|C\|_t\ran.
\)
For $n$ so large such that $(1-\eps_\rho)n/(n-1)<1$, this gives
\[
\|\Del L_i\|_t\le\eps_\rho^{-1}\lan\|C\|_t\ran+\|C_i\|_t.
\]
Thus
\[
\max_i\|\Del L_i\|_t\le c\max_i\|C_i\|_t.
\]
Moreover, noting that $\Del X_i=\Del L_i+B_i+C_i$,
\(
|\Del X_i|\le |\Del L_i|+\lan|\Del L| \ran_i+|C_i|,
\)
and therefore
\[
\max_i\|\Del X_i\|_t\le c\max_i\|C\|_t.
\]
Hence to complete the proof it suffices to show that, for every $x>0$, $\mathbb{Q}(\max_i\|C_i\|_t>x)$ is summable over $n\ge2$.

To this end, note that, under $\mathbb{Q}$, $\{\rho_{ij}\}$ and $\{L_i\}$ are independent. By Assumption \ref{assn2}, for fixed $n$ and $i$, $\rho_{ij}$, $j\in[n]$ are mutually independent, have mean $a$, and satisfy $|\rho_{ij}-a|\le2$. Fix $t$ and denote $F=F(n,t)=\sig\{L_j(s):s\le t, j\in[n]\}$. By Azuma's inequality, for $s\le t$ and $x>0$,
\[
\mathbb{Q}(|C_i(s)|>x\,|\,F)\le 2\exp\Big\{-\frac{(n-1)^2x^2}{2\sum_{j\ne i}L_j(s)^2}\Big\}
\le2\exp\Big\{-\frac{(n-1)^2x^2}{2\sum_{j\ne i}L_j(t)^2}\Big\}.
\]
Hence, for $y>0$,
\[
\mathbb{Q}(|C_i(s)|>x \text{ and } \max\{L_j(t),j\in[n]\}\le y)\le 2e^{-nx^2/(3y^2)}.
\]
Using now Lemma \ref{lem08},
\[
\mathbb{Q}(|C_i(s)|>x)
\le \mathbb{Q}(\max\{L_j(t),j\in[n]\}> y)+2e^{-nx^2/(3y^2)}
\le cne^{-cy^2}+2e^{-nx^2/(3y^2)}.
\]
On the event $\|C_i\|_t>x$ one must have either $|C_i(k\del)|>x/2$ for some $k\in\N$ such that $k\del\le t$, or $w_t(C_i,\del)>x/2$. Applying the above display to all such $k$, and using Lemma \ref{lem08},
\[
\mathbb{Q}(\|C_i\|_t>x)\le cn\del^{-1}e^{-cy^2}+c\del^{-1}e^{-cnx^2y^{-2}}+c\del^{-1}e^{-cx^2\del^{-1}}.
\]
Choosing $y=y(n)=n^{1/4}$ and $\del=n^{-1}$, the above expression takes the form
\[
cn^2e^{-cn^{1/2}}+cne^{-cn^{1/2}x^2}+cne^{-cnx^2}.
\]
Multiplying by $n$ gives an upper bound on $\mathbb{Q}(\max_i\|C_i\|_t>x)$. The result follows.
\qed

\noi{\bf Proof of Theorem \ref{th2}.}
Under $\mathbb{Q}$, the convergence of $(\hat X^n_i,\hat L^n_i)_{i\in[k]}$ is an immediate consequence of Theorem \ref{th1} and Proposition \ref{prop5}, which show that $(X^n_i,L^n_i)_{i\in[k]}\To (\oo X_i,\oo L_i)_{i\in[k]}$ in $C^{(2k)}$, and $(\Del X^n_i,\Delta L^n_i)_{i\in[k]}\to0$ in $C^{(2k)}$, $\mathbb{Q}$-a.s. The same argument gives the convergence $\hat\la^n\to\la$ in $C^{(1)}$ in probability. As for the convergence $\hat\mu^n\to\mu$, note that Proposition \ref{prop5} implies $\sup_{t\in[0,T]}\calW_1(\hat\mu^n_t,\mu^n_t)\to0$, $\mathbb{Q}$-a.s., with $\calW_1$ the $1$-Wasserstein distance. Since $d_{\rm LP}^2\le \calW_1$, the convergence $\mu^n\to\mu$ stated is Theorem \ref{th1}, gives $\hat\mu^n\to\mu$ in $C(\R_+,\calP(\R^2))$ in probability.

Next consider the statement regarding the quenched measure.
Note first that $(X^n,L^n)$, defined in the beginning of this section as processes on $(\mathbb{X},\mathbb{A})$, can be defined on $(\hat\Xi,\hat\calA)$, and since their construction involves only $\hat X^n_0$, $\hat W^n$ and $R^{(n)}$, their law under $Q_\xi$ is as that under $\PP$. Hence again
\begin{equation}\label{x01}
\text{for $\xi\in\Xi$, under $Q_\xi$,
$(X^n_i,L^n_i)_{i\in[k]}\To (\oo X_i,\oo L_i)_{i\in[k]}$ in $C^{(2k)}$.}
\end{equation}

The statement $(\Del X^n_i,\Del L^n_i)_{i\in[k]}\to0$ $\mathbb{Q}$-a.s.
can also be translated to a statement about $Q_\xi$. That is, if $B\in\calA\otimes\hat\calA$ is a set of full $\mathbb{Q}$-measure, then in view of \eqref{x9}, its sections
\[
B_\xi=\{\hat\xi\in\hat\Xi:(\xi,\hat\xi)\in B\},
\qquad\xi\in\Xi,
\]
satisfy
\[
1=\mathbb{Q}(B)=\int Q_\xi(B_\xi)Q(d\xi),
\]
showing that, for $Q$-a.e.\ $\xi$, $B_\xi$ is of full $Q_\xi$-measure.
As a result, for $Q$-a.e.\ $\xi\in\Xi$, one has $(\Del X^n_i,\Del L^n_i)_{i\in[k]}\to0$ in $C^{(2k)}$, $Q_\xi$-a.s. Combined with \eqref{x01}, this gives the convergence of $(\hat X^n_i,\hat L^n_i)_{i\in[k]}$ under $Q_\xi$ as stated in the theorem.

Finally, the claim regarding the convergence of $(\hat\mu^n,\hat\la^n)$ under $Q_\xi$, based on that under $\mathbb{Q}$, is proved along the same lines.
\qed

\subsection*{Acknowledgment} Research was supported by ISF grant 3240/25.


\bibliographystyle{is-abbrv}

\bibliography{main}

\vspace{.5em}

\end{document}